\nonstopmode \numberwithin{equation}{section}
\newtheorem{thm}{Theorem}[section]
\newtheorem{lem}{Lemma}[section]
\newtheorem{cor}[thm]{Corollary}
\newtheorem{prop}[thm]{Proposition}
\newtheorem{step}{Step}[section]
\theoremstyle{definition}
\newtheorem{mlem}{Main lemma}[section]
\newtheorem{assertion}{Assertion}[section]
\newtheorem{cl}{Claim}[section]
\newtheorem{ca}{Case}[section]
\newtheorem{sca}{Subcase}[section]
\newtheorem{scl}{Subclaim}[section]
\newtheorem{conj}[thm]{Conjecture}
\newtheorem{fact}{Fact}[section]
\newtheorem{defn}[thm]{Definition}
\newtheorem{op}[thm]{Open Problem}
\newtheorem{ques}{Question}[section]
\newtheorem{rem}[thm]{Remark}
\newtheorem{exam}[thm]{Example}
\numberwithin{equation}{section}
\newcounter {own}
\def\theown {\thesection       .\arabic{own}}
\newenvironment{pf}[1][]{%
 \vskip 3mm
 \noindent
 \ifthenelse{\equal{#1}{}}%
  {{\slshape Proof. }}%
  {{\slshape #1.} }%
 }%
{\qed\bigskip}
\newcounter{alphabet}
\newenvironment{Thm}[1][]{\refstepcounter{alphabet}%
\bigskip%
\noindent%
{\bf Theorem \Alph{alphabet}}%
\ifthenelse{\equal{#1}{}}{}{ (#1)}%
{\bf .} \itshape}{\vskip 8pt}
\newenvironment{Lem}[1][]{\refstepcounter{alphabet}%
\bigskip%
\noindent%
{\bf Lemma \Alph{alphabet}}%
{\bf .} \itshape}{\vskip 8pt}
\newcounter{alphabet2}
\def\be{\begin{equation}}
\def\ee{\end{equation}}
\newcommand{\ben}{\begin{enumerate}}
\newcommand{\een}{\end{enumerate}}
\newcommand{\blem}{\begin{lem}}
\newcommand{\elem}{\end{lem}}
\newcommand{\bthm}{\begin{thm}}
\newcommand{\ethm}{\end{thm}}
\newcommand{\bcor}{\begin{cor}}
\newcommand{\ecor}{\end{cor}}
\newcommand{\beg}{\begin{exam}}
\newcommand{\eeg}{\end{exam}}
\newcommand{\begs}{\begin{examples}}
\newcommand{\eegs}{\end{examples}}
\newcommand{\bdefe}{\begin{defn}}
\newcommand{\edefe}{\end{defn}}
\newcommand{\bques}{\begin{ques}}
\newcommand{\eques}{\end{ques}}
\newcommand{\bei}{\begin{itemize}}
\newcommand{\eei}{\end{itemize}}
\newcommand{\bcon}{\begin{conj}}
\newcommand{\econ}{\end{conj}}
\newcommand{\bop}{\begin{op}}
\newcommand{\eop}{\end{op}}
\newcommand{\bas}{\begin{assertion}}
\newcommand{\eas}{\end{assertion}}
\newcommand{\bfa}{\begin{fact}}
\newcommand{\efa}{\end{fact}}
\newcommand{\bca}{\begin{ca}}
\newcommand{\eca}{\end{ca}}
\newcommand{\bst}{\begin{step}}
\newcommand{\est}{\end{step}}
\newcommand{\bsca}{\begin{sca}}
\newcommand{\esca}{\end{sca}}
\newcommand{\bcl}{\begin{cl}}
\newcommand{\ecl}{\end{cl}}
\newcommand{\bmlem}{\begin{mlem}}
\newcommand{\emlem}{\end{mlem}}
\newcommand{\bscl}{\begin{scl}}
\newcommand{\escl}{\end{scl}}
\newcommand{\bcons}{\begin{conjs}}
\newcommand{\econs}{\end{conjs}}
\newcommand{\bprop}{\begin{prop}}
\newcommand{\eprop}{\end{prop}}
\newcommand{\br}{\begin{rem}}
\newcommand{\er}{\end{rem}}
\newcommand{\brs}{\begin{rems}}
\newcommand{\ers}{\end{rems}}
\newcommand{\bo}{\begin{obser}}
\newcommand{\eo}{\end{obser}}
\newcommand{\bos}{\begin{obsers}}
\newcommand{\eos}{\end{obsers}}
\newcommand{\bpf}{\begin{pf}}
\newcommand{\epf}{\end{pf}}
\newcommand{\ba}{\begin{array}}
\newcommand{\ea}{\end{array}}
\newcommand{\beq}{\begin{eqnarray}}
\newcommand{\beqq}{\begin{eqnarray*}}
\newcommand{\eeq}{\end{eqnarray}}
\newcommand{\eeqq}{\end{eqnarray*}}
\newcounter{minutes}\setcounter{minutes}{\time}
\newcounter{hours}\setcounter{hours}{\time}
\begin{document}

\bibliographystyle{amsplain}
\title []
{Conjugate type   properties   of harmonic  $(K,K')$-quasiregular mappings}

\def\thefootnote{}
\footnotetext{ \texttt{\tiny File:~\jobname .tex,
          printed: \number\day-\number\month-\number\year,
          \thehours.\ifnum\theminutes<10{0}\fi\theminutes}
} \makeatletter\def\thefootnote{\@arabic\c@footnote}\makeatother

\author{Shaolin Chen}
 \address{S. L. Chen, College of Mathematics and
Statistics, Hengyang Normal University, Hengyang, Hunan 421008,
People's Republic of China.} \email{mathechen@126.com}

\author{David Kalaj}
\address{D. Kalaj, University of Montenegro, Faculty of Natural Sciences and
Mathematics,
Cetinjski put b.b. 81000 Podgorica, Montenegro.}
\email{ davidk@ucg.ac.me}



\subjclass[2010]{Primary: 30C62, 31A05.}
 \keywords{Harmonic  $(K,K')$-quasiregular mapping, Riesz conjugate function type theorem, radial growth type theorem}

\begin{abstract}
The main purpose of this paper is to investigate conjugate type properties  for harmonic $(K,K')$-quasiregular mappings, where $K \geq 1$ and $K' \geq 0$ are constants. We first establish a Riesz type conjugate function theorem for such mappings, which generalizes and refines several existing results. Additionally, we derive an asymptotically sharp constant for a Riesz type theorem pertaining to a specific class of $K$-quasiregular mappings. Furthermore, we obtain Kolmogorov type and Zygmund type theorems for harmonic $(K,K')$-quasiregular mappings.

\end{abstract}

\maketitle \pagestyle{myheadings} \markboth{ S. L. Chen and D. Kalaj}{Conjugate type   properties    of harmonic  $(K,K')$-quasiregular mappings}

\section{Introduction}\label{csw-sec1}

For $a\in\mathbb{C}$ and $r>0$, define the disk $\mathbb{D}(a,r)=\{z\in\mathbb{C}:~|z-a|<r\}$.
In particular, we write $\mathbb{D}_{r}=\mathbb{D}(0,r)$ for the disk centered at the origin with radius $r$,
and $\mathbb{D}=\mathbb{D}_{1}$ for the unit disk. Let $\mathbb{T}=\partial\mathbb{D}$ denote the
unit circle. The real and imaginary parts of a complex number are denoted by
``${\rm Re}$" and ``${\rm Im}$", respectively.
For a continuously differentiable complex-valued function $f$ and a point $z=x+iy\in\mathbb{D}$,
the directional derivative in the direction $\theta\in[0,2\pi]$ is defined as

$$\partial_{\theta}f(z):=\lim_{\rho\rightarrow0^{+}}\frac{f(z+\rho e^{i\theta})-f(z)}{\rho}=f_{z}(z)e^{i\theta}
+f_{\overline{z}}(z)e^{-i\theta},
$$
where $\rho\in(0,1-|z|)$, and the Wirtinger derivatives are given by
 $$f_{z}:=\frac{\partial f}{\partial z}=\frac{1}{2}\left(\frac{\partial f}{\partial x}-i\frac{\partial f}{\partial y}\right)$$ and
 $$f_{\overline{z}}:=\frac{\partial f}{\partial \overline{z}}=\frac{1}{2}\left(\frac{\partial f}{\partial x}+i\frac{\partial f}{\partial y}\right).$$
Then the maximum and minimum of $\partial_{\theta}f(z)$ over $\theta\in[0,2\pi]$ are given by
$$\Lambda_{f}(z):=\max_{\theta\in[0,2\pi]}\big\{|\partial_{\theta}f(z)|\big\}=|f_{z}(z)|+|f_{\overline{z}}(z)|
$$
and
$$\lambda_{f}(z):=\min_{\theta\in[0,2\pi]}\big\{|\partial_{\theta}f(z)|\big\}=\big||f_{z}(z)|-|f_{\overline{z}}(z)|\big|.
$$

A mapping $f:~\Omega\rightarrow\mathbb{C}$ is said to be absolutely
continuous on lines, $ACL$ in brief, in the domain $\Omega$ if for
every closed rectangle $R\subset\Omega$ with sides parallel to the
axes $x$ and $y$, $f$ is absolutely continuous on almost every
horizontal line and almost every vertical line in $R$. Such a
mapping has, of course,  partial derivatives $f_{x}$ and $f_{y}$
a.e. in $\Omega$. Moreover, we say $f\in ACL^{2}$ if $f\in ACL$ and
its partial derivatives are locally $L^{2}$ integrable in $\Omega$.

A   mapping $f$ of  $\Omega$ into
$\mathbb{C}$ is called a {\it $(K,K')$-quasiregular mapping} if

\ben
\item  $f$ is $ACL^{2}$ in  $\Omega$ and $J_{f}\geq0$ a.e. in   $\Omega$, where $J_{f}$ is the  Jacobian of $f$;

\item there are constants $K\geq1$
and $K'\geq0$ such that
$$\Lambda_{f}^{2}\leq KJ_{f}+K'~\mbox{ a.e. in}~\Omega.$$
\een  In particular, if $K'\equiv0$, then a
$(K,K')$-quasiregular mapping is said to be  $K$-quasiregular. Furthermore, we say $f$ is $K$-quasiconformal in $\Omega$ if $f$ is  $K$-quasiregular and homeomorphic in $\Omega$ (see \cite{C-K,C-P-W,C-P,FS,Ni}).


For a sense-preserving harmonic mapping $f$ defined in $\mathbb{D}$,
the Jacobian of $f$ is given by
$$J_{f}=|f_{z}|^{2}-|f_{\overline{z}}|^{2}=\Lambda_{f}\lambda_{f}
$$
and the second complex dilatation of $f$ is given by
$\mu=\overline{f_{\overline{z}}}/f_{z}$.  It is well-known that every harmonic
mapping $f$ defined in a simply connected domain $\Omega$ admits a decomposition $f = h + \overline{g}$, where $h$ and $g$ are analytic;
this decomposition is unique up to an additive constant.
Recall that $f$ is
sense-preserving in $\Omega$ if $J_{f}>0 $ in $\Omega$.
Thus, $f$ is locally univalent and sense-preserving in $\Omega$
if and only if $J_{f}>0$ in $\Omega$; or equivalently if $h'\neq
0$ in $\Omega$ and  $\mu =g'/h'$ has the
property that $|\mu|<1$ in $\Omega$ (see
\cite{Clunie-Small-84,Du,Lewy}). A mapping $f$ in $\Omega$ is called a {\it harmonic $(K,K')$-quasiregular mapping}  if
 $f$ is a harmonic and $(K,K')$-quasiregular mapping in $\Omega$, where $K\geq1$
and $K'\geq0$  are constants. In particular, if $K'=0$, then a harmonic  $(K,K')$-quasiregular mapping in $\Omega$ is called a harmonic $K$-quasiregular
mapping in $\Omega$. Moreover,  we say $f$ is harmonic $K$-quasiconformal in $\Omega$ if $f$ is harmonic  $K$-quasiregular and homeomorphic in $\Omega$.

The main purpose of this paper is to   investigate the behaviour of the conjugate function  of harmonic  $(K,K')$-quasiregular mappings
when its integral means satisfies different growth conditions.

For convenience, we make a notational convention.
Throughout this paper, we use  $\mathscr{A}$ to denote all  analytic functions of $\mathbb{D}$ into $\mathbb{C}$, and
use $\mathscr{H}$ to denote all harmonic mappings of $\mathbb{D}$ into $\mathbb{C}$. Moveover,
 we use the symbol $C$ to denote various positive
constants, whose values may change from one occurrence to another. Also, we use the notation $C=C(a,b,\ldots)$, which means that the constant $C$ depends only on the given parameters $a$, $b$, $\ldots$.

\section{Preliminaries and  main results}\label{csw-sec1-1}

The {\it  Hardy type space}
$\mathbf{H}^{p}_{g}$ $(p\in(0,\infty])$ consists of all those functions
$f$ of $\mathbb{D}$ into $\mathbb{C}$ such that $f$ is measurable, $M_{p}(r,f)$ exists for all $r\in[0,1)$,
$$\|f\|_{p}:=\sup_{r\in[0,1)}M_{p}(r,f)<\infty$$ for
$p\in(0,\infty)$, and $$\|f\|_{\infty}:=\sup_{r\in[0,1)}M_{\infty}(r,f)<\infty$$ for $p=\infty$,
where
$$M_{p}(r,f)=\left(\frac{1}{2\pi}\int_{0}^{2\pi}|f(re^{i\theta})|^{p}d\theta\right)^{\frac{1}{p}}~\mbox{and}~M_{\infty}(r,f)=\sup_{\theta\in[0,2\pi]}|f(re^{i\theta})|.$$
 In particular, we use $\mathbf{h}^{p}=\mathbf{H}^{p}_{g}\cap\mathscr{H}$ and
$H^{p}:=\mathbf{H}^{p}_{g}\cap\mathscr{A}$
to denote the {\it harmonic Hardy space} and the {\it analytic Hardy space}, respectively. If
$f\in\mathbf{h}^{p}$ for some $p\in (1,\infty)$,
then the radial limits
$$f(\zeta)=\lim_{r\rightarrow1^{-}}f(r\zeta)$$ exist for almost every $\zeta\in\mathbb{T}$, and $f\in L^{p}(\mathbb{T})$ (see \cite[Theorems 6.7,  6.13 and 6.39]{ABR}).
  Since $|f|^{p}$ is subharmonic for $p>1$, we see that $M_{p}(r,f)$ is increasing on $r\in[0,1)$, and
 \be\label{fx-1}
 \|f\|_{p}^{p}=\lim_{r\rightarrow1^{-}}M_{p}^{p}(r,f)=\frac{1}{2\pi}\int_{0}^{2\pi}|f(e^{i\theta})|^{p}d\theta.
 \ee



\subsection*{Conjugate type   properties  of harmonic  $(K,K')$-quasiregular mappings}
Let's recall one of the celebrated results on $\mathbf{h}^{p}$ by Riesz.

\begin{Thm}{\rm (M. Riesz)}\label{M-R}
If  $u\in\mathbf{h}^{p}$  for some $p\in(1,\infty)$, then its
harmonic conjugate $v$ is also of class $\mathbf{h}^{p}$, where $v(0)=0$. Furthermore, there is a constant $C(p)$
such that
\be\label{Riez}M_{p}(r,v)\leq C(p) M_{p}(r,u),~r\in[0,1),\ee
for all $u\in\mathbf{h}^{p}$.
\end{Thm}
In \cite{Pi}, Pichorides improved (\ref{Riez}) into the following sharp form
\be\label{Riez-1}\|v\|_{p}\leq\left(\cot\frac{\pi}{2p^{\ast}}\right)\|u\|_{p},\ee
where $p^{\ast}=\max\{p,~p/(p-1)\}$. Later, \eqref{Riez-1} was further improved by Verbitsky \cite{ver} into the following form

\be\label{Riez-2}\frac{1}{\cos\frac{\pi}{2p^{\ast}}}\|v\|_{p}\leq\|f\|_{p}\leq\frac{1}{\sin\frac{\pi}{2p^{\ast}}}\|u\|_{p},\ee
where $f=u+iv$ and $v(0)=0$. We refer interested readers to \cite{AS-2,CH-2023,chenarxiv,duren,FS-1972,FS-2020,graf,verb1,verb2,tams,mel} for more details on this topic.

In 2023,  Liu and Zhu  \cite{aimzhu} generalized Riesz conjugate functions theorem to
 harmonic $K$-quasiregular mappings  with nonvanishing real part in $\mathbb{D}$ (when $1 < p \leq
2$). It is read as follows. 

\begin{Thm}{\rm  (\cite[Theorem  1.1]{aimzhu})}\label{Thm-A}
Suppose that $f=u+iv$ is a  harmonic  $K$-quasiregular mapping in $\mathbb{D}$ with  $v(0)=0$, and $u\geq0$ $($or $u<0$$)$. If $u\in \mathbf{h}^{p}$ for some $p\in(1,2]$, then $v\in \mathbf{h}^{p}.$
Furthermore, there is a constant $C(K,p)$ such that
$$M_{p}(r,v)\leq~C(K,p)M_{p}(r,u).$$
Moreover,   $C(1,p)=\lim_{K\to 1^{+}}C(K,p)=\cot(\pi/(2p^{\ast}))$,  which coincides with the classical analytic case.
\end{Thm}


For $p\in(2,\infty)$, Liu and Zhu established a Riesz conjugate functions theorem for
 harmonic $K$-quasiconformal mappings as follows

\begin{Thm}{\rm  (\cite[Theorem  1.2]{aimzhu})}\label{Thm-B}
Suppose that $f=u+iv$ is a  harmonic $K$-quasiconformal mapping in $\mathbb{D}$ with  $v(0)=0$.
If $u\in \mathbf{h}^{p}$ for some $p\in(2,\infty)$, then $v\in \mathbf{h}^{p}.$
Furthermore, there is a constant $C(K,p)$, depending only on $K$ and $p$, such that
$$M_{p}(r,v)\leq~C(K,p)M_{p}(r,u).$$
\end{Thm}

We remark that the constant $C(K,p)$ in Theorem C is not asymptotically sharp.
Liu and Zhu stated  that finding the asymptotic sharp constant in Theorem  C is  really challenging
 (see \cite[p. 4 and l. 13-16]{aimzhu}).

In the following, using a relatively simple method compared to  \cite{aimzhu},
 we  remove the assumption of ``$u\geq0$ (or $u<0$)" in Theorem B, and replace the assumption of ``harmonic $K$-quasiregular mappings"  in Theorem B by  the weaker one of ``harmonic $(K,K')$-quasiregular mappings", where $K\geq1$ and $K'\geq0$ are constants. Furthermore, we  replace the assumption of ``harmonic $K$-quasiconformal mappings"  in Theorem C by  the weaker one of ``harmonic $(K,K')$-quasiregular mappings". Our result is as follows.

 %


 \begin{thm}\label{thm-0.1}
 Let $f=u+iv$ be a harmonic $(K,K')$-quasiregular mapping in $\mathbb{D}$ with $v(0)=0$, where $K\geq1$ and $K'\geq0$ are constants.
 If $u\in \mathbf{h}^{p}$ for some $p\in(1,\infty)$, then $v\in \mathbf{h}^{p}.$
 Furthermore, there are constants $C_{j}(K,p)$ ($j=1,2$), depending only on $K$ and $p$, such that
$$M_{p}(r,v)\leq\, C_{1}(K,p)M_{p}(r,u)+C_{2}(K,p)\sqrt{K'}.$$
 \end{thm}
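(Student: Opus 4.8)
The plan is to reduce the statement to the classical Riesz theorem (Theorem~\ref{M-R}, in its sharp Pichorides/Verbitsky form) for an auxiliary \emph{analytic} function, paying for the quasiregularity and the additive term $K'$ by explicit error estimates. The key observation is that a harmonic $(K,K')$-quasiregular mapping $f=h+\overline{g}$ is, after controlling the dilatation, a small perturbation of the analytic function $F:=h+g$, whose real part is exactly $u=\real f=\real F$. Concretely, write $f=F-2i\,\image g$ on $\mathbb D$; then $\real F=u$ and the harmonic conjugate of $u$ (normalized to vanish at $0$) is $\widetilde u=\image F+(\text{const})$. So $v=\image f=\widetilde u-2\,\image g+c_0$, and it suffices to bound $M_p(r,\image g)$ in terms of $M_p(r,u)$ and $\sqrt{K'}$, since the term $\widetilde u$ is handled by \eqref{Riez} applied to the analytic Hardy space function $F$ once we know $F\in H^p$, i.e. once we know $\image g$ (equivalently $g'$) is controlled.

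First I would record the pointwise differential inequality coming from the definition: $\Lambda_f^2\le K J_f+K'$ with $J_f=|h'|^2-|g'|^2=\Lambda_f\lambda_f\ge0$. Since $\Lambda_f=|h'|+|g'|$ and $\lambda_f=|h'|-|g'|$ (sense-preserving, so $|g'|\le|h'|$), this gives $(|h'|+|g'|)^2\le K(|h'|^2-|g'|^2)+K'$, i.e. $(|h'|+|g'|)\big((|h'|+|g'|)-K(|h'|-|g'|)\big)\le K'$. Two regimes: where $|g'|/|h'|$ is bounded away from $(K-1)/(K+1)$ from above (the "genuinely quasiregular" part) one extracts $(|h'|+|g'|)^2\lesssim_K K'$, hence a pointwise bound $\Lambda_f\lesssim_K\sqrt{K'}$ there; where $|g'|/|h'|\le (K-1)/(K+1)$ (approximately) the dilatation $|g'|\le \frac{K-1}{K+1}|h'|$ is uniformly bounded below $1$, which is the classical $K$-quasiregular situation with no $K'$. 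The honest way to package this is the elementary inequality: from $(|h'|+|g'|)^2-K(|h'|^2-|g'|^2)\le K'$ one derives, for any $z$,
\[
|g'(z)|\le \frac{K-1}{K+1}\,|h'(z)|+C(K)\sqrt{K'},
\]
by completing the square / solving the quadratic in $|g'|$ (this is where a short routine computation goes). This is the central pointwise estimate.

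From here the argument is structural. Set $\omega:=\frac{K-1}{K+1}\in[0,1)$. The inequality $|g'|\le \omega|h'|+C(K)\sqrt{K'}$ together with $h'=\tfrac12(F'-\overline{?})$—more precisely $F'=h'+g'$ and $h'-g'=$ the derivative of $h-g$, but cleaner: $|h'|\le |F'|+|g'|$—lets us absorb: $|g'|\le\omega(|F'|+|g'|)+C\sqrt{K'}$, so $(1-\omega)|g'|\le \omega|F'|+C\sqrt{K'}$, giving $|g'(z)|\le \frac{\omega}{1-\omega}|F'(z)|+C(K)\sqrt{K'}=\frac{K-1}{2}|F'(z)|+C(K)\sqrt{K'}$. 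Now I would integrate along rays: for fixed $\theta$, $\image g(re^{i\theta})=\image g(0)+\int_0^r \image\big(g'(\rho e^{i\theta})e^{i\theta}\big)\,d\rho$, so $|\image g(re^{i\theta})|\le |g(0)|+\int_0^r|g'(\rho e^{i\theta})|\,d\rho\le |g(0)|+\frac{K-1}{2}\int_0^r|F'(\rho e^{i\theta})|\,d\rho+C(K)\sqrt{K'}$. Taking $L^p(d\theta/2\pi)$ norms and using Minkowski's integral inequality reduces everything to controlling $\int_0^1 M_p(\rho,F')\,d\rho$; but $M_p(\rho,F')$ is where care is needed—one does \emph{not} in general have $F'\in H^p$ from $F\in H^p$.

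The hard part will be closing this loop: I expect to run a Grönwall / bootstrap argument on $M_p(r,F)$. From $F=u+i\widetilde u+i(-2\image g)+c$ and the Riesz inequality $M_p(r,\widetilde u)\le C(p)M_p(r,u)$, I get $M_p(r,F)\le C(p)M_p(r,u)+2M_p(r,\image g)+|c|$, while the ray-integration above gives $M_p(r,\image g)\le C(K)\Big(|g(0)|+\sqrt{K'}+\int_0^r M_p(\rho,F')\,d\rho\Big)$. To turn $\int_0^r M_p(\rho,F')\,d\rho$ back into $M_p(r,F)$ I would use the standard Hardy-space derivative estimate in integrated form—e.g. $\int_0^r M_p(\rho,F')\,d\rho \le C \int_0^r \frac{M_p(\tfrac{1+\rho}{2},F)}{1-\rho}\,d\rho$ is too lossy, so instead I would work on a fixed disk $\overline{\mathbb D_r}$, use subharmonicity of $|F|^p$, Cauchy's estimate on $F'$ in terms of $F$ on a slightly larger circle, and a dyadic decomposition of $[0,r]$—or, more cleanly, first prove the theorem with the conclusion $\|v\|_p\le C_1\|u\|_p+C_2\sqrt{K'}$ under the a priori assumption $u\in H^p$ with small $\|u\|_p$, then scale. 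An alternative, and probably the cleanest route, is to avoid differentiating $F$ altogether: apply Theorem~\ref{M-R} to the harmonic function $\image g$ is not directly available, but one can instead note $2\,\image g = i(\bar g-g)$ is, up to the analytic $g$, so $g$ itself is analytic and $\real(2i\,\image g)=-2\image g$; since $u=\real F$ and $v=\image F-2\image g$, set $\Phi:=F$ and $\Psi:=-2ig$ (analytic!), so $f=\Phi+\overline{\Psi}$ with $\real\Psi$ harmonic-conjugate-controllable, and the dilatation bound $|g'|\le \omega|h'|+C\sqrt{K'}$ says $\Psi'$ is dominated by $\Phi'$ plus $\sqrt{K'}$; then the whole estimate becomes a single application of the $L^p$-boundedness of the Beurling–Ahlfors/Hilbert transform to $\real f=u$ after subtracting off the constant contribution $\sqrt{K'}\cdot(\text{bounded analytic function})$. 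I would present the Grönwall version as the main line and remark on the operator-theoretic shortcut; the true obstacle, in either presentation, is quantitatively separating the "$K$-quasiregular" behaviour (multiplicative, handled by Riesz) from the "$K'$" behaviour (additive, handled by integrating a pointwise $\sqrt{K'}$ bound) without losing control of the constants $C_1(K,p),C_2(K,p)$.
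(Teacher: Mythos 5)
Your pointwise setup matches the paper exactly: decompose $f=h+\overline g$, put $F_1=h+g$ (so $u=\operatorname{Re}F_1$), deduce $|g'|\le\tfrac{K-1}{K+1}|h'|+\tfrac{\sqrt{K'}}{K+1}$ from the $(K,K')$-quasiregularity (the paper in fact gets $\Lambda_f\le K\lambda_f+\sqrt{K'}$ by solving the quadratic, which is the clean version of your ``completing the square''), and hence $|F_2'|\le K|F_1'|+\sqrt{K'}$ with $F_2=h-g$, $v=\operatorname{Im}F_2$. Applying the Riesz--Verbitsky inequality to $F_1$ to get $\|F_1\|_p\lesssim\|u\|_p$ is also exactly what the paper does. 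Up to this point your proposal and the paper coincide.

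The genuine gap is the step you yourself flag as ``the hard part'': turning the \emph{pointwise} derivative comparison $|F_2'|\le K|F_1'|+\sqrt{K'}$ into a \emph{norm} comparison $\|F_2\|_p\lesssim\|F_1\|_p+\sqrt{K'}$. Radial integration plus Gr\"onwall will not close this. As you note, $M_p(\rho,F')\lesssim M_p(\tfrac{1+\rho}{2},F)/(1-\rho)$ and the factor $1/(1-\rho)$ is not integrable near $\rho=1$, so any bootstrap that passes from $F$ to $F'$ by Cauchy estimates and back by integration loses an unbounded amount; dyadic decomposition, scaling, or the a priori small-norm assumption do not repair this, since the loss is structural, not merely a constant. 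The ``Beurling/Hilbert transform shortcut'' you sketch in the last paragraph also isn't carried through: you still have a derivative-level hypothesis and a function-level conclusion, and the $L^p$-boundedness of the Hilbert transform does not by itself bridge that gap.

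The tool you are missing is precisely the one the paper uses: Pavlovi\'c's Littlewood--Paley $\mathscr G$-function norm equivalence $\|F\|_p\asymp\|\mathscr G[F]\|_{L^p}$ with $\mathscr G[F](\zeta)=\bigl(\int_0^1(1-r)|F'(r\zeta)|^2\,dr\bigr)^{1/2}$. This characterizes $H^p$ \emph{at the level of the derivative}, so the pointwise bound $|F_2'|\le K|F_1'|+\sqrt{K'}$ plugs directly into $\mathscr G$ and yields $\|\mathscr G[F_2]\|_{L^p}\lesssim K\|\mathscr G[F_1]\|_{L^p}+\sqrt{K'}$, hence $\|F_2\|_p\lesssim K\|F_1\|_p+\sqrt{K'}$, with no round trip between $F$ and $F'$. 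After that the paper applies the estimate to the dilates $f(rz)$ to recover the stated inequality for $M_p(r,\cdot)$. Without this (or an equivalent derivative-level characterization such as an area-function or Calder\'on-type theorem), the strategy in your proposal does not produce a proof.
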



  In particular,  we provide an asymptotically sharp constant for
  the case $p\ge 2$ on a certain class of $K$-quasiregular mappings as follows.



\begin{thm}\label{maint}
Suppose that $f=u+iv$ is a harmonic $K$-quasiregular mapping in $\mathbb{D}$ with  $u\in\mathbf{h}^{p}$, where $p\in[2,\infty)$.

\begin{enumerate}
\item[{\rm $(\mathscr{A}_{1})$}]
 Let $p\ge 2$ be an even integer such that
\begin{equation}\label{eqcon}
\cos (p(\arg f(0)-\pi/2))\le 0.\end{equation} Then there are approximately sharp constants $C(K, p)$ and $c(K, p)$ with
$$\lim_{K\rightarrow1^{+}}C(K, p)=\csc\left(\frac{\pi}{2p}\right)~{\rm and}~\lim_{K\rightarrow1^{+}}c(K, p)=\cot\left(\frac{\pi}{2p}\right)$$
such that
\begin{equation}\label{11}\|f\|_{p}\le C(K, p)\|u\|_p\end{equation} and
\begin{equation}\label{22}\|v\|_{p}\le c(K, p)\|u\|_p,\end{equation} respectively.
\end{enumerate}

\begin{enumerate}
\item[{\rm $(\mathscr{A}_{2})$}]
 If $p\ge 2$ is not an even integer, then the same conclusion as in {\rm $(\mathscr{A}_{1})$} we get under the additional assumption that
$$f(\mathbb{D})\subset\,D_p:=\left\{w\in\mathbb{D}:~\arg w\in \left[-\frac{(p-1) \pi }{2 p},\frac{(3 p-1) \pi }{2 p}\right]\right\}.$$
\end{enumerate} 
\end{thm}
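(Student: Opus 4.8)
\textbf{Proof proposal for Theorem~\ref{maint}.}
The plan is to reduce the $K$-quasiregular case to the classical analytic situation by an elementary "subordination-type" argument on a suitable power of $f$, combined with the sharp Marcinkiewicz/Pichorides estimate for analytic functions. First I would observe that, since $f=u+iv$ is a sense-preserving harmonic mapping with $h'\neq 0$ (locally) and dilatation $\mu=g'/h'$ satisfying $|\mu|<1$, the $K$-quasiregularity gives the stronger pointwise bound $|\mu|\le k:=(K-1)/(K+1)<1$. Writing $f=h+\overline{g}$, the key analytic object will be $F:=h-g$, which is analytic in $\mathbb{D}$, satisfies $\real F=u$, and has $\image F(0)=-\image(g(0)+\overline{g(0)})/\ldots$; more precisely I would normalise so that the conjugate function of $u$ in the analytic sense is $\tilde u=\image F$ up to the additive constant fixed by $v(0)=0$. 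The point is that $v=\image f=\image(h+\overline g)=\image(h-g)+2\image g=\tilde u+2\image g$ (up to constants), so controlling $v$ amounts to controlling the "anti-analytic defect" $2\image g$, and here $|g'|\le k|h'|$ forces $g$ to be, in an integral-means sense, $k$-subordinate to $h$.

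The second and main step is to exploit the hypothesis on $\arg f(0)$ together with the parity of $p$. When $p$ is an even integer and $\cos(p(\arg f(0)-\pi/2))\le 0$, the function $(-if)^{p}$ — or rather a branch of $f^{p}$ chosen using the sector information — has a well-defined single-valued analytic-majorant structure, because $f$ omits enough of the plane (in case $(\mathscr{A}_2)$, the sector $D_p$ is exactly the set on which $w\mapsto w^{p}$ behaves like a univalent branch covering a half-plane). Concretely, I would introduce $\Phi=(e^{-i\pi/2}f)^{p}$; the sign condition \eqref{eqcon} guarantees $\real \Phi(0)\ge 0$, and the omitted-value/sector condition guarantees $\real \Phi\ge 0$ wherever $\Phi$ is "essentially analytic". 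Then $\Phi$ is a harmonic $K$-quasiregular-type mapping with nonnegative real part, so Theorem~B (with $K'=0$) applies to $\Phi$ and yields $M_p(r,\image\Phi)\le C(K,p)M_p(r,\real\Phi)$ with $C(1^+,p)=\cot(\pi/(2p^{\ast}))$. Unwinding $\Phi=(\,\cdot\,)^{p}$ and using $|f|^{p}=|\real\Phi|+\ldots$, $|\real\Phi|,|\image\Phi|\le|\Phi|=|f|^{p}$, one converts the estimate on $\Phi$ back into $\|f\|_p\le C(K,p)\|u\|_p$; the extra factor $\csc(\pi/(2p))$ versus $\cot(\pi/(2p))$ enters because $\|f\|_p^{p}$ controls $\||\Phi|\|_1$ whereas $\|u\|_p^{p}$ controls only $\|\real\Phi\|_1$, and the trigonometric identity $1/\sin=\sqrt{1+\cot^2}$ (applied at exponent $p^{\ast}=p$ since $p\ge 2$) produces exactly $\csc(\pi/(2p))$ in the limit. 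The bound \eqref{22} on $\|v\|_p$ then follows from \eqref{11} and the elementary inequality $\|v\|_p\le\|f\|_p$ is too lossy, so instead I would run the same argument keeping track of $\image\Phi$ directly, which returns the sharp constant $\cot(\pi/(2p))$.

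For part $(\mathscr{A}_2)$, when $p$ is not an even integer, the branch of $w\mapsto w^{p}$ is genuinely multivalued, and this is precisely why the sectorial hypothesis $f(\mathbb{D})\subset D_p$ is imposed: on $D_p$ one can select a holomorphic branch of the $p$-th power mapping the sector to a half-plane (the angular width of $D_p$ is $2\pi/p\cdot p = $ exactly what is needed), so $\Phi=(\cdot)^{p}\circ f$ is again a well-defined harmonic $K$-quasiregular mapping with $\real\Phi\ge 0$, and the rest of the argument is identical to case $(\mathscr{A}_1)$. I expect the main obstacle to be the bookkeeping that turns the estimate for $\Phi$ back into a sharp estimate for $f$: one must check that the constants do not degrade under the substitution $w\mapsto w^{p}$ (which is where "approximately sharp" rather than "sharp" is the honest claim, since the quasiregularity constant for $\Phi$ is not literally $K$ but some $K(K,p)$ with $K(1^+,p)=1$), and that the normalisation $v(0)=0$ together with the $\arg f(0)$ condition is exactly what makes $\real\Phi(0)\ge 0$ so that Theorem~B is applicable without an additive-constant correction. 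The verification that $D_p$ has the correct opening and is carried into a half-plane by the chosen branch, and that $f(\mathbb{D})\subset\mathbb{D}$ (built into the definition of $D_p$) is compatible with $f$ being only $(K,0)$-quasiregular rather than quasiconformal, will require the most care.
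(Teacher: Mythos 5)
There is a genuine gap in the second (and central) step of your plan. You propose to form $\Phi=(e^{-i\pi/2}f)^{p}$ and then apply Theorem~B to $\Phi$. But Theorem~B is a statement about \emph{harmonic} $K$-quasiregular mappings, and $\Phi$ is not harmonic: the $p$-th power of a harmonic function $f=h+\overline g$ that is not analytic (i.e.\ $g'\not\equiv 0$) fails to satisfy the Laplace equation. Indeed $\Delta\big((-if)^p\big)=p(p-1)(-if)^{p-2}(-i)^2(f_x^2+f_y^2)$ and $f_x^2+f_y^2=4g'h'$, which vanishes only when $f$ is (anti)holomorphic. So the quantity you want to feed into Theorem~B is outside the theorem's hypotheses, and the rest of the reduction (``unwinding $\Phi=(\cdot)^p$'') never gets off the ground. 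A secondary issue: even if $\Phi$ were admissible, the bookkeeping step from an $L^1$-type estimate on $\real\Phi$, $\image\Phi$ back to $L^p$-estimates on $u$, $v$, $f$ is much more delicate than the $1/\sin=\sqrt{1+\cot^2}$ heuristic suggests, and in particular does not by itself reproduce the constants $\csc(\pi/2p)$ and $\cot(\pi/2p)$.

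The paper's actual mechanism is different precisely because of this non-harmonicity. It sets $F_p(z)=\real\!\big(-(-if(z))^p\big)$ and applies Green's representation formula
\[
F_p(0)=\frac{1}{2\pi}\int_{-\pi}^{\pi}F_p(e^{it})\,dt-\frac12\int_{\mathbb{D}}\Delta F_p(z)\,\log\frac{1}{|z|}\,dA(z),
\]
which is valid for any $C^2$ function, not only harmonic ones. The condition $\cos(p(\arg f(0)-\pi/2))\le 0$ (or the sector hypothesis for non-integral $p$) is used exactly to ensure $F_p(0)\ge 0$, so the $F_p(0)$ term can be dropped. Quasiregularity enters through the Laplacian: $|\Delta F_p|\le 4kp(p-1)|f|^{p-2}|h'|^2$ with $k=(K-1)/(K+1)$, and this is then dominated by $(K^2-1)c_p\big(\Delta(|u|^p)+\Delta(|v|^p)\big)$ using Lemma~\ref{simple}. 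Meanwhile the pointwise trigonometric inequalities of Lemma~\ref{vpge2} (namely $|\image z|^p\le a_p|\real z|^p-b_p\Phi_p(z)$ and $|z|^p\le m_p|\real z|^p-n_p\Phi_p(z)$) are integrated on $|z|=1$ and combined with the Green-formula bound to produce \eqref{11} and \eqref{22} directly, without ever invoking Theorem~B. You correctly identified the roles of the $\arg f(0)$ condition and the sector $D_p$ (they make the chosen branch of the $p$-th power single-valued and give the sign at the origin), but the reduction-to-Theorem-B step must be replaced by the Green's-identity plus subharmonic-comparison argument.
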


In order to formulate the next result, let us define the subharmonic function $\Phi_p$.
For $p\geq2$, let $$\varphi(t)=\left\{
                                  \begin{array}{ll}
                                    -\cos p(\pi/2-|t|), & \hbox{if $\pi/2-\pi/p\le |t|\le \pi/2$,} \\
                                    \max\{|\cos p(\pi/2-t)|,|\cos p(\pi/2+t)|\}, & \hbox{if $|t|<\pi/2-\pi/p$,}
                                  \end{array}
                                \right.$$ and  $\varphi(t)=\varphi(\pi-|t|)$ if $\pi/2\le |t|\le \pi$.

Let \be\label{rf-u-1}\Phi_p(r e^{it})=r^p \varphi(t), \\ \  t\in[-\pi,\pi], \ \ 0\le r<\infty.\ee
Then $\Phi_p$ is subharmonic (see  \cite{verb2}).

\begin{prop}\label{prop-2}
Assume that $f=u+iv$ is a complex-valued harmonic function in $\mathbb{D}$ such that $Q_p(z):=\Phi_p(f(z))$ is a subharmonic function or more general assume that $$\int_{-\pi}^{\pi}Q_p( e^{it})\frac{dt}{2\pi}\ge Q_p(0).$$ Assume also that  $v(0)=0$ and $u\in h^p$ for $p\geq2$. Then \begin{equation}\label{prima}\|f\|_p\le \csc\frac{\pi}{2p}\|u\|_p\end{equation}
and \begin{equation}\label{seconda}\|v\|_p\le \cot\frac{\pi}{2p}\|u\|_p,\end{equation} where $p\geq2$.
In particular, if $f$ is an analytic function in $\mathbb{D}$, then $\Phi_p(f(z))$ is subharmonic and we get  \eqref{prima} and \eqref{seconda}.
\end{prop}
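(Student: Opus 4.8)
The plan is to couple two pointwise inequalities on $\IC$ with the sub-mean-value property of $Q_p$ and the normalization $v(0)=0$.

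\emph{Step 1 (pointwise inequalities).} The key input — essentially Verbitsky's lemma \cite{verb2} — is that, with the sharp constants, for every $w=x+iy\in\IC$
\begin{equation}\label{P1}
|w|^{p}+a_p\,\Phi_p(w)\le \csc^{p}\!\Big(\frac{\pi}{2p}\Big)|x|^{p}
\qquad\text{and}\qquad
|y|^{p}+b_p\,\Phi_p(w)\le \cot^{p}\!\Big(\frac{\pi}{2p}\Big)|x|^{p},
\end{equation}
for suitable positive constants $a_p,b_p$ (one may take $a_p=\cot(\pi/2p)$). Both inequalities are homogeneous of degree $p$, so they reduce to inequalities for $\varphi(t)$ on $t\in[-\pi,\pi]$, which one checks on each arc in the definition of $\varphi$: the threshold $|t|=\pi/2-\pi/p$ and the patching by a $\max$ are exactly what make these hold globally, with equality at $|t|=\pi/2-\pi/(2p)$ (where $\Phi_p$ vanishes and $|\sin t|^{p}=\cot^{p}(\pi/2p)|\cos t|^{p}$). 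I would cite \cite{verb2} for this, or reproduce the elementary but constant-sensitive case analysis. One also records $\varphi(0)=\varphi(\pi)\ge0$, so $\Phi_p$ is nonnegative on the real axis.

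\emph{Step 2 (integration and conclusion).} Fix $r\in[0,1)$. Applying \eqref{P1} with $w=f(re^{it})$ (so $x=u(re^{it})$, $y=v(re^{it})$, and $\Phi_p(w)=Q_p(re^{it})$) and averaging over $t$ gives
\begin{equation}\label{P2}
M_p^{p}(r,f)+a_p\cdot\frac{1}{2\pi}\int_{-\pi}^{\pi}Q_p(re^{it})\,dt\le \csc^{p}\!\Big(\frac{\pi}{2p}\Big)M_p^{p}(r,u)
\end{equation}
and, likewise, $M_p^{p}(r,v)+b_p\cdot\frac{1}{2\pi}\int_{-\pi}^{\pi}Q_p(re^{it})\,dt\le\cot^{p}(\pi/2p)M_p^{p}(r,u)$. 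Since $Q_p$ is subharmonic (so the sub-mean-value inequality holds at every radius) one has $\frac{1}{2\pi}\int_{-\pi}^{\pi}Q_p(re^{it})\,dt\ge Q_p(0)=\Phi_p(f(0))=|u(0)|^{p}\varphi(0)\ge0$, using $f(0)=u(0)+iv(0)=u(0)\in\IR$; under the weaker hypothesis one argues directly at $r=1$, using $\frac{1}{2\pi}\int_{-\pi}^{\pi}Q_p(e^{it})\,dt\ge Q_p(0)\ge0$. Discarding the nonnegative $Q_p$-terms in \eqref{P2} and using $M_p(r,u)\le\|u\|_p$ yields $M_p^{p}(r,f)\le\csc^{p}(\pi/2p)\|u\|_p^{p}$ and $M_p^{p}(r,v)\le\cot^{p}(\pi/2p)\|u\|_p^{p}$ for all $r$; since $|f|^{p},|v|^{p}$ are subharmonic for $p\ge1$, $M_p(r,\cdot)$ is nondecreasing, so letting $r\to1^{-}$ gives $f,v\in\mathbf{H}^{p}_{g}$ together with \eqref{prima} and \eqref{seconda}. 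Finally, if $f$ is analytic then $Q_p=\Phi_p\circ f$ is subharmonic (composition of a subharmonic function with a holomorphic map), so the hypothesis holds automatically — recovering the sharp Riesz inequality \eqref{Riez-2} for $p\ge2$.

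\emph{Main obstacle.} The whole argument hinges on Step 1: establishing \eqref{P1} with exactly the constants $\csc(\pi/2p)$ and $\cot(\pi/2p)$, which is where the precise geometry of $\varphi$ (the threshold $\pi/2-\pi/p$, the $\max$-patching near the real axis, and the resulting subharmonicity of $\Phi_p$) does all the work; once \eqref{P1} is in hand, Step 2 is routine. A secondary technical point is justifying the limiting step under the weaker hypothesis $\frac{1}{2\pi}\int_{-\pi}^{\pi}Q_p(e^{it})\,dt\ge Q_p(0)$, where only a single mean-value inequality is available rather than subharmonicity at all radii.
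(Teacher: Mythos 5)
Your proposal is correct and follows essentially the same route the paper indicates: the paper omits the proof of Proposition~\ref{prop-2}, remarking that it is ``similar to the proof of Theorem~\ref{maint}'', and that proof proceeds exactly as you do --- establish the pointwise inequalities of Lemma~\ref{vpge2} (your display \eqref{P1} is the rearrangement of \eqref{ineq1} and \eqref{ineq3} with $n_p=\cot(\pi/2p)$ and $b_p=2\cos^p(\pi/2p)\csc(\pi/p)$), integrate over the circle, and use the sub-mean-value property of $Q_p$ together with $Q_p(0)=\Phi_p(u(0))=|u(0)|^p\varphi(0)\ge 0$ to discard the $Q_p$-term. The only gap you leave is the ``constant-sensitive case analysis'' in Step~1, but this is precisely Lemma~\ref{vpge2} in the paper, which you correctly identify as the crux; your observation that the weaker hypothesis (sub-mean-value at $r=1$ only) requires a little extra care with the boundary limit is also well taken.
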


\begin{rem}
The proof of Proposition \ref{prop-2} is similar to  the proof of Theorem \ref{maint}. Hence we omit it here.
The inequalities \eqref{prima} and \eqref{seconda} for analytic functions  are proved by Verbitsky  \cite{ver} using a duality argument which differs from our direct proof.
\end{rem}






Although the harmonic conjugate of an $\mathbf{h}^{1}$ function $u$ need not be in $\mathbf{h}^{1}$, it does belong to $\mathbf{h}^{q}$ for all $q\in(0,1)$
(cf. \cite[p.405]{H-L} and \cite[Theorem  4.2]{duren}). It is read as follows.

\begin{Thm}{\rm (Kolmogorov's theorem)}\label{Kol}
If $u\in\mathbf{h}^{1}$, then its conjugate $v\in\mathbf{h}^{q}$ for all $q\in(0,1)$, where $v(0)=0$.
\end{Thm}

Since $u\in\mathbf{h}^{1}$ is not enough to ensure its conjugate $v\in\mathbf{h}^{1}$, but the stronger
hypothesis $u\in\mathbf{h}^{p}$ for some $p>1$ is sufficient, it is nature to ask for the ``minimal" growth
restriction on $u$ which will imply $v\in\mathbf{h}^{1}$.  Zygmund consider this question and obtained the
following result (cf.  \cite[p.405]{H-L} and \cite[Theorem  4.3]{duren}).

\begin{Thm}{\rm (Zygmund's theorem)}\label{Zyg}
If $u$ is a harmonic function in $\mathbb{D}$ with
$$\int_{0}^{2\pi}|u(re^{i\theta})|\log^{+}|u(re^{i\theta})|d\theta<\infty,$$
then its conjugate $v\in\mathbf{h}^{1}$, where $v(0)=0$ and $\log^{+}|u(re^{i\theta})|=\max\{0,\log|u(re^{i\theta})|\}$. Moreover, $f=u+iv\in\,H^{1}$.
\end{Thm}

For $p\in(0,1)$, Hardy and Littlewood proved the following result (see \cite[Theorem 7]{H-L}).

\begin{Thm}{\rm (Hardy-Littlewood's Theorem)}\label{HL-1931}
Suppose that $f=u+iv$ is an analytic function in $\mathbb{D}$ with $v(0)=0$. 
If  $u\in\mathbf{h}^{p}$ for
some $p\in(0,1)$, then

$$M_{p}(r,v)=O\left(\left(\log\frac{1}{1-r}\right)^{\frac{1}{p}}\right)$$
as $r\rightarrow1^{-}$.
\end{Thm}

In the following, we extend Theorems D, E and F to harmonic $(K,K')$-quasiregular mappings.

\begin{thm}\label{thm-K-L}
Let $f=u+iv$ be a harmonic $(K,K')$-quasiregular mapping in $\mathbb{D}$ with $v(0)=0$, where $K\geq1$ and $K'\geq0$ are constants.

\begin{enumerate}
\item[{\rm $(\mathscr{B}_{1})$}] If $u\in\mathbf{h}^{1}$, then $v\in\mathbf{h}^{q}$ for all $q\in(0,1)$.
\item[{\rm $(\mathscr{B}_{2})$}] If $$\int_{0}^{2\pi}|u(re^{i\theta})|\log^{+}|u(re^{i\theta})|d\theta<\infty,$$ then $v\in\mathbf{h}^{1}$.
\item[{\rm $(\mathscr{B}_{3})$}] If $u\in\mathbf{h}^{p}$ for some $p\in(0,1)$, then
\be\label{jk-1q}M_{p}(r,v)=O\left(\left(\log\frac{1}{1-r}\right)^{\frac{1}{p}}\right)\ee
as $r\rightarrow1^{-}$. In particular, if  $p=1,\,1/2,\,1/3,\,\ldots$, then the estimate of {\rm (\ref{jk-1q})} is sharp.
\end{enumerate}
\end{thm}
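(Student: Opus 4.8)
\textbf{Proof proposal for Theorem \ref{thm-K-L}.}
The plan is to reduce each of the three statements to the corresponding classical analytic fact (Theorems D, E, F) by controlling the harmonic conjugate $v$ in terms of the analytic conjugate of the analytic part of $f$, using the $(K,K')$-quasiregularity hypothesis as in the proof of Theorem \ref{thm-0.1}. Write $f=h+\overline{g}$ with $h,g\in\mathscr{A}$, and $h'=g_1$, $g'=g_2$. Since $f$ is $(K,K')$-quasiregular, $\Lambda_f^2\le KJ_f+K'$, which gives a pointwise bound $|g_2|^2=|g'|^2\le \alpha(K)|h'|^2+\beta(K)K'$ for suitable constants (this is the same dilatation estimate underlying Theorem \ref{thm-0.1}; concretely $|f_{\overline z}|\le k|f_z|$ fails for $K'\ne 0$, but one still gets $\Lambda_f\le \sqrt{K}\,\lambda_f^{1/2}\Lambda_f^{1/2}+\sqrt{K'}$, hence $|g'|\le c_1(K)|h'|+c_2(K)\sqrt{K'}$ by solving the resulting quadratic). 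Now decompose $u=\real f=\real h+\real g$ and $v=\image f=\image h-\image g$. The key observation is that $\image h$ is the analytic conjugate of $\real h$ (normalized at $0$), and one must also dominate $\image g$ and $\real g$; since $g'$ is dominated by $h'$ plus a $\sqrt{K'}$ term, $g$ is dominated (in the appropriate integral-mean sense) by $h$ plus a linear-in-$z$ error coming from the constant $\sqrt{K'}$, and such an affine error is harmless for all three growth conditions.

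For $(\mathscr{B}_1)$: assume $u\in\mathbf{h}^1$. First I would show $\real h\in\mathbf{h}^1$ and then, via the $|g'|\le c_1(K)|h'|+c_2(K)\sqrt{K'}$ bound and a standard integration argument (e.g.\ estimating $M_q(r,g)$ through $M_q(r,g')$ using the Hardy--Littlewood relation $M_q(r,g)\lesssim \int_0^r M_q(\rho,g')\,d\rho + |g(0)|$ for $q<1$), deduce that $\real g$ and $\image g$ lie in $\mathbf{h}^q$ for all $q<1$. Apply Kolmogorov's theorem (Theorem D) to $\real h$ to get $\image h\in\mathbf{h}^q$ for all $q<1$. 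Combining, $v=\image h-\image g\in\mathbf{h}^q$ for all $q\in(0,1)$. For $(\mathscr{B}_2)$: under the $L\log^+L$ hypothesis on $u$, the same hypothesis passes to $\real h$ (since $\real g$ is controlled, and $L\log^+L$ is stable under the relevant comparisons — here one may need to be slightly careful, replacing $u$ by $\real h$ using that $\real h = u - \real g$ and $\real g$ is in every $\mathbf{h}^q$, $q<1$, hence in particular in $\mathbf{h}^1$ with an $L\log^+L$ majorant absorbed by subadditivity of $t\log^+t$); then Zygmund's theorem (Theorem E) gives $\image h\in\mathbf{h}^1$, and since $\image g\in\mathbf{h}^q$ for $q<1$ — actually one needs $\image g\in\mathbf{h}^1$, which follows because $g'$ is bounded in $\mathbf{h}^1$-mean by $h'$ and $h\in H^1$ forces $h'\in H^1$... — so $\image g\in\mathbf{h}^1$ and $v\in\mathbf{h}^1$. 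For $(\mathscr{B}_3)$: with $u\in\mathbf{h}^p$, $p\in(0,1)$, one has $\real h\in\mathbf{h}^p$, Hardy--Littlewood's theorem (Theorem F) gives $M_p(r,\image h)=O\big((\log\tfrac1{1-r})^{1/p}\big)$, and the analogous bound (in fact a better one, typically $O((\log\tfrac1{1-r})^{1/p})$ or smaller) for $M_p(r,\image g)$ via the $g'$-estimate; adding, $M_p(r,v)=O\big((\log\tfrac1{1-r})^{1/p}\big)$. Sharpness for $p=1,1/2,1/3,\dots$ is inherited from the analytic case by taking $K=1$, $K'=0$ and an appropriate extremal analytic function (the classical examples for which Theorem F is sharp).

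The main obstacle, I expect, is the bookkeeping around the analytic part versus the anti-analytic part when the growth condition is not a genuine $L^p$-norm bound: for $(\mathscr{B}_2)$ the class $L\log^+L$ is not a Banach space and one must argue carefully that controlling $\real g$ in all $\mathbf{h}^q$, $q<1$, is enough to transfer the $L\log^+L$ hypothesis from $u$ to $\real h$ (and that $\image g\in\mathbf{h}^1$). The cleanest route is probably to note that for a harmonic $(K,K')$-quasiregular $f$, $\Lambda_f^2\le KJ_f+K'$ forces $|g'|^2\le \frac{K-1}{K+1}\Lambda_h^2+\frac{K'}{?}$-type control making $|g'|\lesssim |h'|+\sqrt{K'}$; then $g-g(0)$ differs from a multiple of $h-h(0)$ by a function whose derivative is $O(\sqrt{K'})$, i.e.\ by an $O(\sqrt{K'}\,|z|)$ term, which is bounded on $\mathbb{D}$ and thus irrelevant for every one of the three conclusions. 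Granting that reduction, the three parts follow from Theorems D, E, F essentially verbatim. A secondary (minor) obstacle is verifying the sharpness claim in $(\mathscr{B}_3)$: one should exhibit, for each $p=1/n$, the standard analytic example showing Theorem F cannot be improved, and observe it is trivially harmonic $(1,0)$-quasiregular, so the bound \eqref{jk-1q} is attained in order of magnitude.
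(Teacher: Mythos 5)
There is a genuine gap, and it stems from the choice of decomposition. The paper writes $f=h+\overline g$ and then immediately passes to $F_1=h+g$, $F_2=h-g$, so that $u=\operatorname{Re} F_1$ and $v=\operatorname{Im} F_2$ \emph{exactly}; the hypothesis on $u$ is thus a hypothesis on the analytic function $F_1$, the classical theorems D, E, F apply directly to $F_1$, and the quasiregularity estimate packaged as the pointwise bound $|F_2'|\le K|F_1'|+\sqrt{K'}$ (inequality \eqref{eq-ghk-2}) transfers everything to $F_2$ via the Littlewood--Paley $\mathscr G$-function (Theorem H). You instead split $u=\operatorname{Re} h+\operatorname{Re} g$ and $v=\operatorname{Im} h-\operatorname{Im} g$ and announce ``first I would show $\operatorname{Re} h\in\mathbf h^1$.'' That step is not justified and the route to it is circular: the only control you have on $g$ comes from $|g'|\lesssim|h'|+\sqrt{K'}$, so to bound $\operatorname{Re} g$ you already need $h$ (or $h'$) to lie in a Hardy-type class, which is precisely what you are trying to extract from $u=\operatorname{Re} h+\operatorname{Re} g$. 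The paper's $F_1,F_2$ normalization is exactly what dissolves this circularity, and your sketch never recovers it.

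Two of the auxiliary claims you use in $(\mathscr B_2)$ are also false as stated. You write that $\operatorname{Re} g$ lies ``in every $\mathbf h^q$, $q<1$, hence in particular in $\mathbf h^1$'' --- the inclusion goes the other way ($\mathbf h^1\subset\mathbf h^q$ for $q<1$), so membership in all $\mathbf h^q$ with $q<1$ gives nothing about $\mathbf h^1$. You also assert ``$h\in H^1$ forces $h'\in H^1$,'' which is not true (e.g.\ $h(z)=(1-z)^{1/2}$ is in $H^\infty\subset H^1$ but $h'\notin H^1$). Both errors occur at the exact points where your split $u=\operatorname{Re} h+\operatorname{Re} g$ forces you to compare $h$ and $g$ in non-Banach settings ($L\log^+L$, $\mathbf h^q$ with $q<1$). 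The sharpness remark in $(\mathscr B_3)$ is acceptable (one may indeed take the analytic extremal example with $K=1$, $K'=0$, though the paper exhibits a genuinely $K$-quasiregular $f_j$), and the high-level plan --- reduce to D, E, F via a derivative bound --- is the right one; but the reduction as you set it up does not close, and it needs the $F_1,F_2$ (rather than $h,g$) bookkeeping plus the $\mathscr G$-function transfer to become a proof.
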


It is well known that every analytic function which has an $H^{1}$ derivative is continuous in the closed disk and absolutely continuous
on the boundary. In  particular, $f'\in H^{1}$ implies $f\in H^{\infty}$. This latter result was generalized by Hardy and Littlewood into the following from:

\begin{Thm}{\rm (\cite[Theorem  5.12]{duren})}\label{Thm-I}
If $f'\in H^{p}$  for some $p<1$, then $f\in H^{q}$, where $q=p/(1-p)$. For each value of $p$, the index $q$ is best possible.

\end{Thm}

In the following, we extend Theorem G to harmonic $(K,K')$-quasiregular mapping in $\mathbb{D}$, where $K\geq1$ and $K'\geq0$.

\begin{thm}\label{thm-07}
Let $f=u+iv$ be  a harmonic $(K,K')$-quasiregular mapping in $\mathbb{D}$ with $v(0)=0$, where $K\geq1$ and $K'\geq0$ are constants.
If $|\nabla u|\in \mathbf{H}^{p}_{g}$  for some $p<1$, then $v,~f\in \mathbf{h}^{q}$, where $q=p/(1-p)$. For each value of $p$, the index $q$ is best possible.
\end{thm}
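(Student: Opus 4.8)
\textbf{Proof proposal for Theorem \ref{thm-07}.}

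The plan is to reduce the $(K,K')$-quasiregular case to the classical analytic case (Theorem I) via a pointwise comparison between the gradient of the harmonic map $f=h+\overline g$ and the gradient of its real part $u$. First I would record the basic inequality available for harmonic $(K,K')$-quasiregular mappings: from $\Lambda_f^2\le KJ_f+K'$ and $J_f=\Lambda_f\lambda_f\le\Lambda_f|\nabla u|$ (since the real part of a directional derivative is dominated by its modulus, and conversely $\Lambda_u\le\Lambda_f$, $\lambda_f\le\Lambda_u=|\nabla u|$ after identifying $|\nabla u|$ with $\Lambda_u$ up to a harmless constant), one gets a quadratic inequality $\Lambda_f^2\le K\Lambda_f|\nabla u|+K'$, hence $\Lambda_f\le K|\nabla u|+\sqrt{K'}$ a.e. in $\mathbb D$. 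This is the engine: it says $|\nabla f|$ is controlled pointwise by $|\nabla u|$ plus a constant.

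Next I would pass to integral means. Since $f=h+\overline g$ with $h,g$ analytic and $f_z=h'$, $f_{\bar z}=\overline{g'}$, we have $\Lambda_f=|h'|+|g'|$, so the pointwise bound gives $|h'(z)|+|g'(z)|\le K|\nabla u(z)|+\sqrt{K'}$. Taking $M_p$-means and using the hypothesis $|\nabla u|\in\mathbf H^p_g$ together with the fact that constants lie in every $\mathbf H^p_g$, we obtain $h',g'\in H^p$ for the same $p<1$. Now apply Theorem I (the Hardy–Littlewood result quoted in the excerpt) separately to $h$ and $g$: $h',g'\in H^p$ forces $h,g\in H^q$ with $q=p/(1-p)$, and therefore $f=h+\overline g\in\mathbf h^q$; taking the imaginary part gives $v\in\mathbf h^q$ as well (the condition $v(0)=0$ just normalizes the additive constant and plays no role in the growth estimate). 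For the sharpness of the exponent $q$, I would observe that the classical extremal analytic example $f_0$ realizing optimality in Theorem I is itself harmonic and is trivially $(1,0)$-quasiregular, so it is admissible here and shows $q=p/(1-p)$ cannot be improved for any fixed $p<1$.

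The main obstacle, and the only place requiring care, is the derivation of the clean pointwise estimate $\Lambda_f\le K|\nabla u|+\sqrt{K'}$: one must verify the chain $J_f\le\Lambda_f\lambda_f\le\Lambda_f\,|\nabla u|$ rigorously, i.e.\ that $\lambda_f$ (the minimal stretching) is bounded by the Euclidean norm of the gradient of the real part. This follows because for any direction $\theta$, $|\partial_\theta u(z)|=|\mathrm{Re}\,\partial_\theta f(z)|\le|\partial_\theta f(z)|$, and choosing $\theta$ to minimize $|\partial_\theta f|$ shows $\lambda_f(z)\le\max_\theta|\partial_\theta u(z)|=:|\nabla u(z)|$ — here I would fix once and for all the normalization of $|\nabla u|$ (whether it means $\Lambda_u$ or $\sqrt{u_x^2+u_y^2}$, which differ by a bounded factor) so that the statement of the theorem is literally correct; in either normalization the constant $K$ may be replaced by an absolute multiple of $K$, which is harmless. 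Once this inequality is in hand, solving the quadratic $t^2-K|\nabla u|\,t-K'\le0$ for $t=\Lambda_f$ gives $\Lambda_f\le\tfrac12\big(K|\nabla u|+\sqrt{K^2|\nabla u|^2+4K'}\big)\le K|\nabla u|+\sqrt{K'}$, and the rest is a direct appeal to Theorem I. A secondary (routine) point is checking that $h'$ and $g'$ individually, not merely their sum, lie in $H^p$ — immediate since both are nonnegative-dominated by $\Lambda_f$ pointwise.
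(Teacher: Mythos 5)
Your route is sound in outline and genuinely more direct than the paper's. You establish the pointwise bound $\Lambda_f \le K|\nabla u| + \sqrt{K'}$, conclude $h',g'\in H^p$, and apply the Hardy--Littlewood derivative theorem (Theorem~\ref{Thm-I}) to $h$ and $g$ separately before reassembling $f=h+\overline{g}$. The paper instead works with $F_1=h+g$ and $F_2=h-g$, places $F_1\in H^q$ by Theorem~\ref{Thm-I} (since $|F_1'|=|\nabla u|$), and then transfers the conclusion to $F_2$ by combining the inequality $|F_2'|\le K|F_1'|+\sqrt{K'}$ from \eqref{eq-ghk-2} with the Littlewood--Paley $\mathscr{G}$-function equivalence (Theorem~\ref{Thm-L-G}) and Lemma~\ref{Lemx}. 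Your version bypasses the Littlewood--Paley machinery entirely, which is a real simplification; the paper's version has the merit of reusing the $\mathscr{G}$-function scheme that also drives its proofs of Theorems~\ref{thm-0.1} and~\ref{thm-K-L}. Your sharpness remark (the analytic extremal example is $(1,0)$-quasiregular, hence admissible in every $(K,K')$ class) is correct and shorter than the paper's explicit non-conformal example.

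There is, however, a genuine flaw in the step you yourself flag as the crux, the claim $\lambda_f\le|\nabla u|$. You argue that since $|\partial_\theta u|\le|\partial_\theta f|$ for every $\theta$, ``choosing $\theta$ to minimize $|\partial_\theta f|$ shows $\lambda_f\le\max_\theta|\partial_\theta u|$.'' That deduction is reversed: at the minimizing direction $\theta_0$ one only gets $|\partial_{\theta_0}u|\le|\partial_{\theta_0}f|=\lambda_f$, i.e.\ a bound on a value of $|\partial_\theta u|$ from above by $\lambda_f$, not a bound on $\lambda_f$ from above by $\max_\theta|\partial_\theta u|$; and minimizing $|\partial_\theta u|$ over $\theta$ even gives $0$, since a real-valued $u$ has zero directional derivative orthogonal to $\nabla u$. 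The inequality you need is nevertheless true, and it follows in one line from the decomposition and the reverse triangle inequality: since $J_f\ge 0$ a.e.\ we have $|h'|\ge|g'|$, hence
\begin{equation*}
\lambda_f=|h'|-|g'|\le |h'+g'|=|F_1'|=|\nabla u|,
\end{equation*}
and here $|\nabla u|=\sqrt{u_x^2+u_y^2}$ coincides exactly with $\Lambda_u$ because $u_{\overline{z}}=\overline{u_z}$ for real $u$, so the normalization worry you raise does not arise. With this repair, the quadratic estimate for $\Lambda_f$, the passage to $H^p$ via $(a+b)^p\le a^p+b^p$ for $p<1$, the application of Theorem~\ref{Thm-I} to $h$ and $g$, and the sharpness argument all go through.
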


The proofs of  Theorems   \ref{maint}, \ref{thm-0.1}, \ref{thm-K-L} and \ref{thm-07} will be presented in Sec. \ref{csw-sec2}.

\section{Conjugate type   properties  of harmonic  $(K,K')$-quasiregular mappings}\label{csw-sec2}

For $p\in(0,\infty )$, denote by $L^{p}(\mathbb{T})$ the
set of all measurable functions $\psi$ of $\mathbb{T}$ into
$\mathbb{C}$ with
$$\|\psi\|_{L^{p}}=\left(\frac{1}{2\pi}\int_{0}^{2\pi}|\psi(\zeta)|^{p}d\theta\right)^{\frac{1}{p}}<\infty.$$
The following result follows from \cite[Theorem  1.1]{Pav-2013}.

\begin{Thm}\label{Thm-L-G}
 For $f\in\mathscr{A}$, the Littlewood-Paley $\mathscr{G}[f]$-function of $f$ is
defined as follows
$$\mathscr{G}[f](\zeta)=\left(\int_{0}^{1}(1-r)|f'(r\zeta)|^{2}dr\right)^{\frac{1}{2}},~\zeta\in\mathbb{T}.$$ Then,
for  $p\in(0,\infty)$, there is a positive constant $C=C(p)$, depending only on $p$, such that
$$\frac{1}{C}\|f\|_{p}\leq\|\mathscr{G}[f]\|_{L^{p}}\leq\,C\|f\|_{p}.$$
\end{Thm}

 The following result is well-known.

\begin{Lem}\label{Lemx}
Suppose that $a,~b\in[0,\infty)$ and $p\in(0,\infty)$. Then
$$(a+b)^{p}\leq2^{\max\{p-1,0\}}(a^{p}+b^{p}).$$
\end{Lem}

\subsection*{The proof of Theorem \ref{thm-0.1}}
 Since $\mathbb{D}$ is a simply connected domain, we see that $f$ admits a decomposition
 $f=h + \overline{g}$, where $h$ and $g$ are analytic
 in $\mathbb{D}$ with $g(0)=0$. Let $F_{1}=h+g$ and $F_{2}=h-g$. Then $u={\rm Re}(F_{1})$ and $v={\rm Im}(F_{2})$.
 Since $f$ is a harmonic $(K,K')$-quasiregular mapping in $\mathbb{D}$, we see that
$$(\Lambda_{f}(z))^{2}\leq KJ_{f}(z)+K' ~\mbox{ for $z\in\mathbb{D}$}.
$$
This gives that, for $z\in\mathbb{D}$,
\beqq
\Lambda_{f}(z)\leq\frac{K\lambda_{f}(z)+\sqrt{\big(K\lambda_{f}(z))\big)^{2}+4K'}}{2}
\leq K\lambda_{f}(z)+\sqrt{K'},
\eeqq
and consequently
\be\label{K-1}|g'(z)|\leq\mu_{1}|h'(z)|+\mu_{2},
\ee where $\mu_{1}=(K-1)/(K+1)$ and $\mu_{2}=\sqrt{K'}/(1+K)$.
By (\ref{K-1}), we have

\beqq
|F_{2}'|\leq\Lambda_{f}\leq(1+\mu_{1})|h'|+\mu_{2}
\eeqq
and
\beqq
|F_{1}'|\geq\lambda_{f}\geq(1-\mu_{1})|h'|-\mu_{2},
\eeqq
which imply that
\be\label{eq-ghk-2}
|F_{2}'|\leq\frac{1+\mu_{1}}{1-\mu_{1}}|F_{1}'|+\frac{2\mu_{2}}{1-\mu_{1}}=K|F_{1}'|+\sqrt{K'}.
\ee
 It follows from (\ref{Riez-2}) and the assumption ``$u\in \mathbf{h}^{p}$ for some $p\in(1,\infty)$" that
 \beqq\label{eqry-3}
 \frac{1}{\cos\frac{\pi}{2p^{\ast}}}\|{\rm Im}(F_{1})\|_{p}\leq\|F_{1}\|_{p}\leq\frac{1}{\sin\frac{\pi}{2p^{\ast}}}\|u\|_{p},
 \eeqq
 which, together  with Theorem H, gives that

\be\label{eq-x-1}
 \|\mathscr{G}[F_{1}]\|_{L^{p}}\leq\,C\|F_{1}\|_{p}\leq\frac{C}{\sin\frac{\pi}{2p^{\ast}}}\|u\|_{p},
 \ee
 where $C=C(p)$ is a positive constant. By (\ref{eq-ghk-2}), we see that

 \beqq
 \|\mathscr{G}[F_{2}]\|_{L^{p}}&=&\left(\frac{1}{2\pi}\int_{0}^{2\pi}\left(\int_{0}^{1}(1-r)|F_{2}'(re^{i\theta})|^{2}dr\right)^{\frac{p}{2}}d\theta\right)^{\frac{1}{p}}\\
 &\leq&\left(\frac{1}{2\pi}\int_{0}^{2\pi}\left(\int_{0}^{1}(1-r)\left(2K^{2}|F_{1}'(re^{i\theta})|^{2}+2K'\right)dr\right)^{\frac{p}{2}}d\theta\right)^{\frac{1}{p}},
 \eeqq
 which, together with (\ref{eq-x-1}) and Lemma I, yields that
 \beq\label{x-2}
 \|\mathscr{G}[F_{2}]\|_{L^{p}}&\leq&2^{\frac{\max\{0,\frac{p}{2}-1\}}{p}+\frac{1}{2}}\left( K\|\mathscr{G}[F_{1}]\|_{L^{p}}+\sqrt{K'}\right)\\ \nonumber
 &\leq&2^{\frac{\max\{0,\frac{p}{2}-1\}}{p}+\frac{1}{2}}\left( KC\|F_{1}\|_{p}+\sqrt{K'}\right),
 \eeq
 where $C=C(p)$ is a positive constant.
 From (\ref{eqry-3}), (\ref{x-2}) and Theorem H, there is a positive constant $C=C(p)$ such that

 \beq\label{x-3}
 \|v\|_{p}&\leq&\|F_{2}\|_{p}\leq\,C\|\mathscr{G}[F_{2}]\|_{L^{p}}\\ \nonumber
 &\leq&2^{\frac{\max\{0,\frac{p}{2}-1\}}{p}+\frac{1}{2}}C\left( KC\|F_{1}\|_{p}+\sqrt{K'}\right)\\ \nonumber
 &\leq&2^{\frac{\max\{0,\frac{p}{2}-1\}}{p}+\frac{1}{2}}C\left( \frac{KC}{\sin\frac{\pi}{2p^{\ast}}}\|u\|_{p}+\sqrt{K'}\right).
 \eeq
 For any fixed $r\in[0,1)$, let $$F(z)=f(rz)=U(z)+iV(z),~z\in\mathbb{D},$$ where $U(z)=u(rz)$ and  $V(z)=v(rz)$.
It follows from (\ref{fx-1})  that

\beqq
M_{p}(r,v)=\left(\frac{1}{2\pi}\int_{0}^{2\pi}|v(re^{i\theta})|^{p}d\theta\right)^{\frac{1}{p}}=\left(\frac{1}{2\pi}\int_{0}^{2\pi}|V(e^{i\theta})|^{p}d\theta\right)^{\frac{1}{p}}
=\|V\|_{p}
\eeqq
and
\beqq
M_{p}(r,u)=\left(\frac{1}{2\pi}\int_{0}^{2\pi}|u(re^{i\theta})|^{p}d\theta\right)^{\frac{1}{p}}=\left(\frac{1}{2\pi}\int_{0}^{2\pi}|U(e^{i\theta})|^{p}d\theta\right)^{\frac{1}{p}}
=\|U\|_{p},
\eeqq
which, together with (\ref{x-3}), imply that

\beqq
M_{p}(r,v)\leq2^{\frac{\max\{0,\frac{p}{2}-1\}}{p}+\frac{1}{2}}C\left( \frac{KC}{\sin\frac{\pi}{2p^{\ast}}}M_{p}(r,u)+\sqrt{K'}\right).
\eeqq
In particular, if  $K-1=K'=0$, then $C_{1}(1,p)=\cot(\pi/(2p^{\ast}))$ follows from (\ref{Riez-1}).
The proof of this theorem is complete.
 \qed

The following lemma is  useful for proving Theorem \ref{maint}. Since its proof is very elementary, we omit it here.

\begin{lem}\label{simple}
For $p\geq2$, we have $$|x+iy|^{p-2}\le c_p\left(|x|^{p-2}+|y|^{p-2}\right)$$
and $$|x|^p+|y|^p\le 2^{1 - p/2}|x+i y|^p,$$
 where $x,~y\in\mathbb{R}$ and $$c_p=\left\{
 \begin{array}{ll}
  1, & \hbox{for $2<p\le 4$} \\
  2^{\frac{1}{2} (-4+p)}, & \hbox{for $p\ge 4$.}
   \end{array}
   \right.$$
\end{lem}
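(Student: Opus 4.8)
\textbf{Proof proposal for Lemma \ref{simple}.}

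The plan is to prove the two pointwise inequalities separately, each by a normalization/scaling argument that reduces everything to a single-variable calculus problem on $[0,\pi/2]$. Both inequalities are homogeneous in $(x,y)$, so by dividing through by $|x+iy|$ raised to the appropriate power we may assume $x^2+y^2=1$ and write $x=\cos t$, $y=\sin t$ with $t\in[0,\pi/2]$ (using symmetry in sign and in swapping $x\leftrightarrow y$ to restrict to the first quadrant).

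For the second inequality $|x|^p+|y|^p\le 2^{1-p/2}|x+iy|^p$: after normalization this becomes $\cos^p t+\sin^p t\le 2^{1-p/2}$ for $t\in[0,\pi/2]$. Set $s=\cos^2 t\in[0,1]$, so the claim is $\psi(s):=s^{p/2}+(1-s)^{p/2}\le 2^{1-p/2}$. Since $p/2\ge 1$, the function $\tau\mapsto \tau^{p/2}$ is convex on $[0,1]$, hence $\psi$ is convex, so its maximum on $[0,1]$ is attained at an endpoint, giving $\psi\le\psi(0)=\psi(1)=1$. But actually we want the bound $2^{1-p/2}$, which is $\ge 1$ when $p\le 2$ and $\le 1$ when $p\ge 2$; for $p\ge 2$ one has $2^{1-p/2}\le 1$, so convexity gives the wrong direction. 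The correct argument: $\psi$ is convex, so it is \emph{minimized} in the interior; to get an upper bound we note $\psi(s)\le\max\{\psi(0),\psi(1)\}=1\le\cdots$ — wait, that needs $1\le 2^{1-p/2}$, i.e. $p\le 2$. So for $p\ge 2$ we must instead observe that $\psi$ achieves its \emph{maximum} at $s=1/2$? No: convex functions maximize at endpoints. Let me reconsider: for $p\ge 2$, $\tau^{p/2}$ is convex, $\psi$ is convex, $\max_{[0,1]}\psi=1$, and we want $\le 2^{1-p/2}$; since $2^{1-p/2}<1$ for $p>2$ this is \emph{false} at $s=0$: there $|x|^p+|y|^p=1$ while $2^{1-p/2}|x+iy|^p=2^{1-p/2}<1$. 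Hence the inequality as written must have the power-mean direction I mislabeled: by the power mean inequality, $\left(\frac{|x|^p+|y|^p}{2}\right)^{1/p}\ge\left(\frac{|x|^2+|y|^2}{2}\right)^{1/2}$ for $p\ge 2$, i.e. $|x|^p+|y|^p\ge 2^{1-p/2}(x^2+y^2)^{p/2}=2^{1-p/2}|x+iy|^p$. So the intended inequality is the \emph{reverse}; I will prove $|x|^p+|y|^p\ge 2^{1-p/2}|x+iy|^p$ via the power mean (equivalently Jensen applied to the concave function $\tau\mapsto\tau^{2/p}$ on the values $|x|^p,|y|^p$), and flag that the display in the statement should read ``$\ge$''.

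For the first inequality $|x+iy|^{p-2}\le c_p(|x|^{p-2}+|y|^{p-2})$ with $p\ge 2$: normalize to $x^2+y^2=1$, so we need $1\le c_p(\cos^{p-2}t+\sin^{p-2}t)$ for $t\in[0,\pi/2]$, i.e. $c_p\ge 1/m_p$ where $m_p:=\min_{t\in[0,\pi/2]}g(t)$ and $g(t)=\cos^{p-2}t+\sin^{p-2}t$. When $2\le p\le 4$ the exponent $p-2\in[0,2]$ makes $\tau\mapsto\tau^{p-2}$ concave (for $p-2\le 1$) or at worst we check directly that $g(t)\ge 1$ on $[0,\pi/2]$: indeed $g'(t)=(p-2)(\sin t\cos^{p-3}t\cdot(-1)+\cos t\sin^{p-3}t)$ vanishes at $t=\pi/4$ and at the endpoints; evaluating, $g(0)=g(\pi/2)=1$ and $g(\pi/4)=2\cdot(1/\sqrt2)^{p-2}=2^{2-p/2}$, which for $p\le 4$ is $\ge 1$, and for $p\le 4$ with $p-2\le 1$ the endpoint values are the minimum — so $m_p=1$ and $c_p=1$ works. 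For $p\ge 4$, $g(\pi/4)=2^{2-p/2}\le 1$ becomes the minimum (the exponent $p-2\ge 2$ makes the interior critical point a minimum), so $m_p=2^{2-p/2}$ and we may take $c_p=1/m_p=2^{p/2-2}=2^{\frac12(p-4)}$, matching the stated value. I would present this as: differentiate $g$, identify $t=\pi/4$ as the only interior critical point, compare $g(\pi/4)=2^{2-p/2}$ with the endpoint value $g(0)=g(\pi/2)=1$, and in each range take $c_p=1/\min\{1,2^{2-p/2}\}$.

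The main obstacle is purely bookkeeping: getting the direction of each inequality straight (the second one as written in the statement appears to have the inequality sign reversed — it should be $|x|^p+|y|^p\ge 2^{1-p/2}|x+iy|^p$, which is exactly what is used downstream to bound $\|u\|_p^p$ from below in terms of $\|f\|_p^p$), and splitting the parameter range $2\le p\le 4$ versus $p\ge 4$ correctly when locating the extremum of $g(t)=\cos^{p-2}t+\sin^{p-2}t$. Once the monotonicity/convexity of $\tau\mapsto\tau^{p-2}$ is pinned down on each range, both inequalities follow from one-variable calculus with no further difficulty; no macro beyond those already in the paper is needed.
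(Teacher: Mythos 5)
You are right that the second displayed inequality of Lemma \ref{simple} is false as printed: for $p>2$ take $y=0$ and $x\neq0$, which would give $|x|^p\le 2^{1-p/2}|x|^p$ with $2^{1-p/2}<1$. The correct two-sided statement for $p\ge2$ is $2^{1-p/2}|x+iy|^p\le |x|^p+|y|^p\le |x+iy|^p$ (the left inequality by the power mean inequality, the right one because $|x|^p\le |x|^2|x+iy|^{p-2}$ and likewise for $y$). Your treatment of the first inequality is also sound; the cleanest packaging is subadditivity $(a+b)^{\alpha}\le a^{\alpha}+b^{\alpha}$ with $a=x^2$, $b=y^2$, $\alpha=(p-2)/2\in[0,1]$ for $2\le p\le 4$, and the power mean inequality between exponents $p-2$ and $2$ for $p\ge4$; note that your parenthetical ``concave for $p-2\le1$'' does not cover $3<p\le4$, though your critical-point computation for $g(t)=\cos^{p-2}t+\sin^{p-2}t$ does. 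The paper omits its own proof of this lemma, so there is nothing to compare against on that front.

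Where your proposal goes wrong is in the diagnosis of how the second inequality is used. In the proof of Theorem \ref{maint} the lemma is invoked to bound $\|u\|_p^p+\|v\|_p^p$ \emph{from above} by $2^{1-p/2}\|f\|_p^p$, in order to absorb the term $n_p(K^2-1)c_p(\|u\|_p^p+\|v\|_p^p)$ into the left-hand side $\|f\|_p^p$; it is not used to bound $\|u\|_p^p$ from below. Simply reversing the inequality sign, as you propose, would therefore leave that step of Theorem \ref{maint} unsupported. The repair consistent with the rest of the paper is to replace $2^{1-p/2}$ by $1$ in the second display (i.e. to use $|x|^p+|y|^p\le|x+iy|^p$), which changes the constant in Theorem \ref{maint} to $C^p(K,p)=m_p\left(1-(K^2-1)c_pn_p\right)^{-1}$ under the hypothesis $(K^2-1)c_pn_p<1$, and still preserves the asymptotic sharpness $C^p(K,p)\to m_p$ as $K\to1^{+}$. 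Finally, a finished proof should not retain the exploratory back-and-forth of your draft: state the corrected inequality once and prove it directly.
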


The following lemma will play an important role in the proof of Theorem \ref{maint}.
\begin{lem}\label{vpge2}
Let $$\mathcal{A}_p=\left\{
                  \begin{array}{ll}
\left[-\pi, \pi\right], & \hbox{if $p$ is an even integer,} \\
\left[-\frac{\pi}{2}+\frac{\pi}{p}, \frac{3\pi}{2}-\frac{\pi}{p}\right], & \hbox{if $p\in[2,\infty)\setminus  2\mathbb{Z}$.} \\
                  \end{array}
                \right.$$
   \begin{enumerate}
\item[{\rm (I)}]   For any $z\in \mathbb{C}$ and $p\ge 2$,  \begin{equation}\label{ineq1}
|{\rm Im} (z)|^p\le a_p |{\rm Re} (z) |^p- b_p \Phi_p(z),
\end{equation}
and for $z=|z|e^{it}$, $t\in\mathcal{A}_p$,
 \begin{equation}\label{ineq2}
|{\rm Im} (z)|^p\le a_p |{\rm Re} (z) |^p+ b_p |z|^p \cos (p(\pi/2-t)),
\end{equation}
where $\Phi_p$  is defined in (\ref{rf-u-1}), $$a_p=\cot^p\left(\frac{\pi }{2 p}\right)~\mbox{and}~b_p=2 \cos^p\left(\frac{\pi }{2 p}\right) \csc\left(\frac{\pi }{p}\right).$$
\\
\item[{\rm (II)}] For any $z\in \mathbb{C}$ and $p\ge 2$,  \begin{equation}\label{ineq3}
| z|^p\le m_p |{\rm Re}(z) |^p- n_p \Phi_p(z),
\end{equation}
and for $z=|z|e^{it}$, $t\in\mathcal{A}_p$, \begin{equation}\label{ineq4}
| z|^p\le m_p |{\rm Re}(z) |^p+n_p |z|^p\cos p(\pi/2-t),
\end{equation}
where $$m_p=\csc^p\left(\frac{\pi }{2 p}\right)~\mbox{and}~ n_p= \cot\frac{\pi}{2p}.$$
\end{enumerate}
\end{lem}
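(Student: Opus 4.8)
\textbf{Plan for the proof of Lemma \ref{vpge2}.}

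The statement is an elementary real-variable inequality about the function $\varphi$ (hence $\Phi_p$), so the plan is to reduce everything to a one-variable trigonometric estimate. First I would homogenize: for $z\ne 0$ write $z=|z|e^{it}$, divide the claimed inequalities \eqref{ineq1}--\eqref{ineq4} by $|z|^p$, and observe that $|{\rm Im}(z)|^p/|z|^p=|\sin t|^p$, $|{\rm Re}(z)|^p/|z|^p=|\cos t|^p$, and $\Phi_p(z)/|z|^p=\varphi(t)$. Thus \eqref{ineq1} becomes the scalar inequality
\begin{equation}\label{planA}
|\sin t|^p\le a_p|\cos t|^p-b_p\,\varphi(t),\qquad t\in[-\pi,\pi],
\end{equation}
and \eqref{ineq3} becomes $1\le m_p|\cos t|^p-n_p\varphi(t)$; by the symmetries $\varphi(t)=\varphi(-t)$ and $\varphi(t)=\varphi(\pi-|t|)$, together with the corresponding symmetries of $|\sin t|,|\cos t|$, it suffices to prove these for $t\in[0,\pi/2]$. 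The inequalities \eqref{ineq2} and \eqref{ineq4} then follow from \eqref{planA} and its analogue because on the relevant range of $t$ one has, by the very definition of $\varphi$, either $\varphi(t)=-\cos p(\pi/2-t)$ (when $\pi/2-\pi/p\le t\le\pi/2$) so that $-b_p\varphi(t)=b_p\cos p(\pi/2-t)$ exactly, or else (when $0\le t<\pi/2-\pi/p$) one checks that $-\varphi(t)\le\cos p(\pi/2-t)$, i.e. $\varphi(t)\ge-\cos p(\pi/2-t)$; the point of restricting to $t\in\mathcal{A}_p$ in \eqref{ineq2}, \eqref{ineq4} is precisely to guarantee $\cos p(\pi/2-t)\ge\varphi(t)\ \cdot(-1)$ hold with the correct sign, and on $\mathcal{A}_p$ the quantity $p(\pi/2-t)$ ranges over $[-\pi,\pi]$ in the non-even-integer case, so $-\cos p(\pi/2-t)\le 1$ controls $\varphi$.

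The core is therefore \eqref{planA} on $[0,\pi/2]$, split at $s:=\pi/2-\pi/p$. On $[s,\pi/2]$ we must show $|\sin t|^p+b_p\cos p(\pi/2-t)\le a_p|\cos t|^p$. Substituting the definitions $a_p=\cot^p(\pi/2p)$, $b_p=2\cos^p(\pi/2p)\csc(\pi/p)$ and changing variables $t=\pi/2-u$ with $u\in[0,\pi/p]$, this reads $\cos^p u+2\cos^p(\pi/2p)\csc(\pi/p)\cos(pu)\le\cot^p(\pi/2p)\sin^p u$. This is exactly the kind of sharp inequality underlying Pichorides' constant; I expect it to be handled by noting that equality holds at the endpoint $u=\pi/p$ (where $\cos(pu)=-1$ and one recovers the definition of $b_p$) and at $u=0$ would need checking, and then proving the intermediate inequality by a convexity/monotonicity argument on the function $g(u)=\cot^p(\pi/2p)\sin^p u-\cos^p u-b_p\cos pu$, showing $g\ge 0$ via $g(0)\ge0$, $g(\pi/p)=0$ and sign analysis of $g'$. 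On the complementary interval $[0,s)$, $\varphi(t)=\max\{|\cos p(\pi/2-t)|,|\cos p(\pi/2+t)|\}\ge 0$, so $-b_p\varphi(t)\le 0$ and we would instead need the cruder bound $|\sin t|^p\le a_p|\cos t|^p-b_p\varphi(t)$; but here it is cleaner to go through the chain $|\sin t|^p\le a_p|\cos t|^p+b_p\cos p(\pi/2-t)$ only where valid and otherwise exploit that for small $t$, $|\sin t|^p$ is itself small while $a_p|\cos t|^p$ is close to $a_p>0$; a direct estimate $|\sin t|\le\tan(\pi/2p)|\cos t|$ fails for $t$ near $\pi/2$, which is exactly why the correction term $\varphi$ is needed only on $[s,\pi/2]$, and on $[0,s)$ one checks $|\tan t|\le\tan(\pi/2-\pi/p)\le$ something compatible with absorbing $b_p\varphi(t)$ — this bookkeeping is routine once the structure is seen.

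Part (II) is entirely parallel, with $|z|^p/|z|^p=1$ in place of $|\sin t|^p$, so \eqref{ineq3} reduces to $1\le\csc^p(\pi/2p)|\cos t|^p-\cot(\pi/2p)\varphi(t)$ on $[0,\pi/2]$; the same splitting at $s=\pi/2-\pi/p$ applies, with the sharp case again at $t=\pi/2-\pi/p$. I would also record that \eqref{ineq3} follows from \eqref{ineq1} by adding $|{\rm Re}(z)|^p$ to both sides and using $|z|^p=|{\rm Re}(z)|^p+|{\rm Im}(z)|^p$ together with $1+a_p=1+\cot^p(\pi/2p)$ versus $m_p=\csc^p(\pi/2p)$ — but these are not equal in general for $p>2$, so (II) genuinely needs its own (easier, since no $\sin^p$ term) one-variable verification rather than a reduction. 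The main obstacle, as usual with Pichorides-type constants, is the sharp scalar inequality on $[s,\pi/2]$: one must verify that the stated $a_p,b_p$ (resp. $m_p,n_p$) are exactly the constants for which the inequality holds with equality at $t=\pi/2-\pi/p$, and prove nonnegativity of the relevant single-variable function on that interval; I would lean on the cited computation in \cite{verb2} for the subharmonicity of $\Phi_p$ and adapt the same elementary calculus estimate, organizing it as: (i) equality at the critical $t$; (ii) behaviour at the endpoints $t=s$ and $t=\pi/2$; (iii) at most one interior critical point, settled by examining $g'$.
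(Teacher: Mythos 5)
Your overall framework matches the paper's: homogenize to a one-variable trigonometric inequality, exploit the symmetries $\varphi(t)=\varphi(-t)$ and $\varphi(t)=\varphi(\pi-|t|)$ to reduce to $[0,\pi/2]$, split at $\pi/2-\pi/p$, and then settle the hard interval by single-variable calculus (showing the relevant function has a unique interior critical point and is nonnegative). However, there are two genuine gaps.

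First, you misidentify the equality point. After your substitution $t=\pi/2-u$, you expect equality at the endpoint $u=\pi/p$, "where $\cos(pu)=-1$ and one recovers the definition of $b_p$." That is not where equality occurs. The extremal value is attained at the interior point $u=\pi/(2p)$, i.e. $t=\pi/2-\pi/(2p)$, where $\cos(p(\pi/2-t))=\cos(\pi/2)=0$ and $\varphi(t)=0$, so \eqref{ineq1} reduces to $\cos^p(\pi/(2p))\le\cot^p(\pi/(2p))\sin^p(\pi/(2p))$, an identity. A direct check for $p=4$ shows your proposed endpoint is \emph{not} an equality point: at $t=\pi/2-\pi/4=\pi/4$ one gets strict inequality. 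This matters structurally, because the monotonicity/sign analysis of $g'$ you plan must be organized around the correct critical point; the paper studies $g(t)=h(t)\sin^{-p}t$, shows $g'$ vanishes only at $\pi/2-\pi/(2p)$, verifies $g''>0$ there and $g(\pi/2)>0$, and deduces $g(\pi/2-\pi/p)>0$ by the uniqueness of the critical point.

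Second, your treatment of the complementary interval $[0,\pi/2-\pi/p)$ does not work as sketched. You propose absorbing $b_p\varphi(t)$ by a crude bound $|\tan t|\le\tan(\pi/2-\pi/p)\le\text{(something)}$ and call the rest "routine bookkeeping," but on this interval $\varphi(t)\ge 0$, so the term $-b_p\varphi(t)$ makes the inequality \emph{harder}, not easier, and a pure tangent estimate cannot absorb it: $\varphi$ oscillates and can equal $1$ on $[0,\pi/2-\pi/p)$ while $a_p|\cos t|^p$ is not large enough by itself. The paper instead uses a comparison principle: since $\varphi$ maps $[\pi/2-\pi/p,\pi/2]$ onto $[-1,1]$, for each $t\in[0,\pi/2-\pi/p)$ there is $s\in[\pi/2-\pi/p,\pi/2]$ with $\varphi(t)=\varphi(s)=-\cos(p(\pi/2-s))$, and because $t<s$ one has $\cos^pt\ge\cos^ps$ and $|\sin t|^p\le\sin^ps$, so the already-proven inequality at $s$ dominates the desired one at $t$. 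The same device is used to get \eqref{ineq2}, \eqref{ineq4} from the hard-interval case, with the extra observation that for negative $t\in\mathcal{A}_p$ one can find $s$ with $|s|\le|t|$ and $\cos(p(\pi/2-t))=\cos(p(\pi/2-s))$. Without this comparison step your argument does not close on $[0,\pi/2-\pi/p)$, and your derivation of \eqref{ineq2}/\eqref{ineq4} for negative $t$ (where $\cos(p(\pi/2-t))$ is not an even function of $t$ unless $p$ is an even integer) is also left unjustified.
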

\begin{proof} Let us make some reduction at start. Without loss of generality, we assume $z\neq0$.
For $z=|z|e^{it}\in\mathbb{C}$ with $t=\arg z$, let $$LHS_1(t)=|{\rm Im} (z)|^p,~RHS_1(t)=a_p |{\rm Re} (z) |^p- b_p \Phi_p(z),~LHS_3(t)=| z|^p$$
and $$RHS_3(t)=m_p |{\rm Re}(z) |^p- n_p \Phi_p(z)).$$
On the other hand, for $z=|z|e^{it}$ with $t\in\mathcal{A}_p$,
let $$LHS_2(t)=|{\rm Im} (z)|^p,~RHS_2(t)=a_p |{\rm Re} (z) |^p+ b_p |z|^p \cos (p(\pi/2-t)),~LHS_4(t)=| z|^p$$
and $$RHS_4(t)=m_p |{\rm Re}(z) |^p+n_p |z|^p\cos p(\pi/2-t).$$
 In all inequalities in \eqref{ineq1}-\eqref{ineq4}, for $j=1,2,3,4$, we have to prove that $\Psi_j(t)=LHS_j(t)-RHS_j(t)\le 0$. Since $\Psi_j(\pi-t)=\Psi_j(t)$,
 instead of proving the inequalities \eqref{ineq1}-\eqref{ineq4} for $[\pi/p-\pi/2,3\pi/2-\pi/p]$, we prove them  for $[\pi/p-\pi/2,\pi/2]$. For even $p$, $\Psi_j(-t)=\Psi_j(t)$. So the inequalities \eqref{ineq1}-\eqref{ineq4} will follow if we prove them for $[0,\pi]$.

We first prove {\rm (I)}. Assume first that $t\in[\pi/2-\pi/p,\pi/2]$. We need to prove that $$h(t):=\cot^p\frac{\pi }{2 p} \cos^p  t-2 \cos^p\frac{\pi }{2 p} \varphi(t)\csc   \left(\frac{\pi }{p}\right)-\sin^p  t\ge 0.$$ In this case $$\varphi(t)=-\cos  \left[p \left(\frac{\pi }{2}-t\right)\right].$$
Then, for $g(t) = h(t)\sin^{-p} t$, we have $$g'(t)=-\frac{p \left(\cos  ^p t \cot^p\frac{\pi }{2 p}+2 \cos^p\frac{\pi }{2 p} \cos  [t] \cos  \left[\frac{1}{2} p (\pi -2 t)+t\right] \csc\frac{\pi }{p}\right)}{ \cos t \sin^{1+p} t}.$$
We need to show that $t_\circ=\pi/2-\pi/(2p)$ is the only solution to $g'(t)=0$. This is equivalent to showing that $$k(t):=\cos^{p-1}  t\left/\cos  \left[ p (\pi -2 t)/2+t\right] \right.+2 \csc   \left(\frac{\pi }{p}\right) \sin^p  \left(\frac{\pi }{2 p}\right)$$ has only one zero in $[\pi/2-\pi/p,\pi/2]$. But
$$k'(t)=(1-p) \cos^{p-2}  t \sec^2\left[ p (\pi -2 t)/2+t\right] \sin  \left[ p (\pi -2 t)/2+2 t\right],$$ and this function has the constant sign for $t\in [\pi/2-\pi/p,\pi/2)$. This shows that $g$ has only one critical point in $[\pi/2-\pi/p,\pi/2)$. Since $$g''(\pi/2-\pi/(2p))=4 (-1+p) p \cot \left(\frac{\pi }{p}\right) \csc   \left(\frac{\pi }{p}\right)>0$$ and by \eqref{cos12} below $$g(\pi/2)=-1+2 \cos^p\frac{\pi }{2 p} \csc   \left(\frac{\pi }{p}\right)>0,$$ it follows that $t=\pi/2-\pi/(2p)$ is the local minimum of $g$, and moreover that minimum is equal to $g(\pi/2-\pi/(2p))=0$. Then also $g(\pi/2-\pi/p)>0$, otherwise $g$ will have one additional stationary point in $(\pi/2-\pi/p,\pi/2-\pi/(2p))$, which is impossible. This implies that $g(t)\ge 0$ for every $t\in [\pi/2-\pi/p,\pi/2]$ and so $h$ is non-negative in this interval. If $t\in[0, \pi/2-\pi/p]$, then there is $s\in[\pi/2-\pi/p, \pi/2]$ so that  $$\varphi(t)=\max\{|\cos p(\pi/2-t)|,|\cos p(\pi/2+t)|\}=-\cos(p(\pi/2-s)).$$ Then  \begin{equation}\label{bigsplit}\begin{split}h(t)&=\cot^p\frac{\pi }{2 p} \cos  ^p t+2 \cos^p\frac{\pi }{2 p} \varphi(t)\csc   \left(\frac{\pi }{p}\right)-\sin  ^p t\\&=\cot^p\frac{\pi }{2 p} \cos  ^p t+2 \cos^p\frac{\pi }{2 p} (\cos(p(\pi/2-s)))\csc   \left(\frac{\pi }{p}\right)-\sin  ^p t\\&\ge \cot^p\frac{\pi }{2 p} \cos^2  (s)+2 \cos^p\frac{\pi }{2 p} (\cos(p(\pi/2-s)))\csc   \left(\frac{\pi }{p}\right)-\sin^p s.
\end{split}\end{equation}
The last expression is non-negative by the previous case.
This implies \eqref{ineq1}.

To prove \eqref{ineq2}, observe that if $s\in[0,\pi/2-\pi/p]$, then there is $t\in [\pi/2-\pi/p,\pi/2]$, so that $\cos(p(\pi/2-s))=\varphi(t):=\cos(p(\pi/2-t))$, because $\varphi[\pi/2-\pi/p,\pi/2]=[-1,1]$. Then we argue as in \eqref{bigsplit} and obtain the desired conclusion. Finally, if $\frac{\pi}{p}-\frac{\pi}{2}\le t\le 0$, then there is $s\in [\pi/2-\pi/p,\pi/2]$, so that $\cos(p(\pi/2-t))=\cos(p(\pi/2-s))$ and again use the previous argument. Observe that $|s|\le |t|$ and in \eqref{bigsplit} instead of $\sin s$ we should have $|\sin s|$.

Next, we prove {\rm (II)}.  Similarly as before we only prove the inequalities for $t\in[\pi/2-\pi/p,\pi/2]$. In this case we need to prove the inequality $$1-\cos  \left[p \left(\frac{\pi }{2}-t\right)\right] \cot\frac{\pi }{2 p}-\cos^p  t \csc^p   \left(\frac{\pi }{2 p}\right)\le 0$$ for $t\in[\pi/2-\pi/p,\pi/2]$. Then we make the substitution $t=\pi/2-t$ and arrive to the equivalent inequality
$$\psi(s):=1-\cos  (ps) \cot\frac{\pi }{2 p}-\csc^p   \left(\frac{\pi }{2 p}\right) \sin^p s $$ for $s\in[0,\pi/p]$.
Further for $\phi(s)=\sin^{-p}(s)\psi (s)$ we have $$\phi'(s) = p \sin^{-1-p}  (s) \left[\cos  (s) \left(-1+\cos  (ps) \cot\frac{\pi }{2 p}\right)+\cot\frac{\pi }{2 p} \sin  (s) \sin  (ps)\right].$$
Let $$\zeta(s)= \cos  (s) \left(-1+\cos  (ps) \cot\frac{\pi }{2 p}\right)+\cot\frac{\pi }{2 p} \sin  (s) \sin  (ps).$$
Then $$\zeta'(s) = \sin  (s)-(p-1)\cot\frac{\pi }{2 p} \sin  [(p-1)s].$$ We prove now that $\zeta'(s)<0$. Let $q=p-1$ and define $\sigma(s) =\sin(qs)/(\sin{s})$. Prove that $\sigma(s)$ is monotonous decreasing.
We have $$\sigma'(s)= \csc   (s) \sin  (q s) (-\cot (s)+q \cot (q s)).$$ Further $$ \partial_s (-\cot (s)+q \cot (q s))=\csc^2 s -q^2 \csc^2(q s) .$$ The last expression is negative for $s\in(0,\pi/(q+1)]$ if and only if $\sin  (q s)-q \sin  (s)\le 0$. Finally $$\partial_s (\sin  (q s)-q \sin  (s))=q( \cos (q s) -  \cos ( s))\le 0$$ if $s\in (0,\pi/(q+1)]$, because $s<qs < \pi$ and $\cos$ is monotonous in $[0,\pi]$. Thus $$\sin  (q s)-q \sin  (s)\le \sin  (q \cdot 0)-q \sin0=0.$$ Therefore $$-\cot (s)+q \cot (q s)\le \lim_{s\to 0} \left(-\cot s+q \cot (q s)\right)=0.$$ So $\sigma'(s)<0$ and thus $\sigma$ is decreasing.

Thus \[\begin{split}\frac{\zeta'(s)}{\sin s} &=(1-(p-1)\cot\frac{\pi }{2 p} \sigma(s))\\&\le (1-(p-1)\cot\frac{\pi }{2 p} \sigma(\pi/(p)))\\&=\chi(p):=1-(p-1) \cot\frac{\pi }{2 p}.\end{split}\]
Then $$\chi'(p)=\frac{(-1+p) \pi +p^2 \sin  \left(\frac{\pi }{p}\right)}{p^2 \left(-1+\cos  \left(\frac{\pi }{p}\right)\right)},$$ which is clearly negative. Thus
$$\frac{\zeta'(s)}{\sin s}\le \chi(p)\le \chi(2)=0.$$  Thus $\zeta$ is decreasing. Since $\zeta(\pi/(2p))=0$, we see that $\pi/(2p)$ is the only stationary point of $\phi$ for $s\in(0,\pi/p]$. Further $\lim_{s\to 0} \phi(s) =-\infty $ and $$\phi(\pi/p)=2^{-p}\tan^{p}\left(\frac{\pi}{2p}\right)  \left(-2^p \cos^p\frac{\pi }{2 p}+ \left(1+\cot\frac{\pi }{2 p}\right)\right).$$

We need to show that $\Phi(\pi/p)<0$.
First of all $$\partial_p \cos^p\frac{\pi }{2 p}=\cos^p\frac{\pi }{2 p} \omega(p),$$ where $$\omega(p)=\left(\log \left[\cos\frac{\pi }{2 p}\right]+\frac{\pi  \tan \left(\frac{\pi }{2 p}\right)}{2 p}\right).$$

Since $$\omega'(p)=-\frac{\pi ^2 \sec^2\left(\frac{\pi }{2 p}\right)}{4 p^3}<0$$ we obtain that $$\partial_p \cos^p\frac{\pi }{2 p}\ge \lim_{p\to \infty}\cos^p\frac{\pi }{2 p} \omega(p)=0.$$ Thus \begin{equation}\label{cos12}\cos^p\frac{\pi }{2 p}\ge \frac{1}{2}.\end{equation}
So, to prove $\phi(\pi/2)<0$, it is enough to show that for $p\ge 2$ $$1-2^{-1+p}+\cot\frac{\pi }{2 p}\le 0.$$

But $$1-2^{-1+p}+\cot\frac{\pi }{2 p}\le \beta(p):=1-p+\cot\frac{\pi }{2 p}. $$ Finally $$\beta'(p)=\csc^2 x \left(\frac{2 x^2}{\pi }-\sin  ^2 x\right),$$ where $x=\frac{\pi}{2p}\in[0,\pi/4]$.  But, $\sin x$ is concave in $[0,\pi/4]$ and thus $\sin x>\frac{2 \sqrt{2} x}{\pi }$ for $x\in[0,\pi/4]$. Thus $\beta'(p)\le 0$. In particular $\beta(p)\le \beta (2)=0$ for every $p\ge 2$. Thus $\phi$ is non-positive  in $[0,\pi/p]$ and has its zero (maximum) at  $\pi/(2p)$. This finishes the proof of the case $t\in[\pi/2-\pi/p,\pi/2]$. Similarly as in (I) we reduce the case  $t\in [0,\pi/p]$ to the former case.
\end{proof}

\subsection{The proof of Theorem \ref{maint}}
Let  $f=h+\overline{g}$ be a harmonic $K$-quasiregular mapping of $\mathbb{D}$ into $\mathbb{C}\setminus(-\infty, 0]$, where $h$ and $g$ are analytic functions in $\mathbb{D}$.
Then for $p\in[2,\infty)$, $F_p={\rm Re}(-(-if)^p)$ is well-defined and smooth in $\mathbb{D}$. In particular, if $p\geq2$ is an integer, then $F_p$ is smooth in $\mathbb{D}$ provided that $f$ is an arbitrary harmonic $K$-quasiregular mapping in $\mathbb{D}$.
By straightforward calculations, we get
\beq\label{fut}
|\Delta F_p|&\le& p(p-1) R^{p-2}|f_x^2+f_y^2|=4p(p-1)R^{p-2}|g'h'|\\ \nonumber
&\le& 4 k p(p-1) R^{p-2}| h'|^2,
\eeq where $R=|f|$ and $k=(K-1)/(K+1)$.
Now we use \cite[Theorem~9.9]{gt} to conclude that

\be\label{Green}F_p(0)=\frac{1}{2\pi}\int_{-\pi}^{\pi}F_p(e^{it})dt
 -\frac{1}{2}\int_{\mathbb{D}}\Delta (F_p(z))\log \frac{1}{|z|}dA(z),\ee
where $dA(z)=dxdy/\pi$ denotes the normalized area measure in
$\mathbb{D}$.

 By Lemma~\ref{vpge2}, we have

 $$|f(r e^{it})|^p\le m_p|u(r e^{it})|^p-n_pF_p(r e^{it}).$$ Therefore \begin{equation}\label{vf2}\|f\|_p^p\le m_p\|u\|_p^p-n_p\int_{-\pi}^{\pi}F_p( e^{it})\frac{dt}{2\pi}.\end{equation}
Moreover
$$|v(r e^{it})|^p\le a_p|u(r e^{it})|^p-b_pF_p(r e^{it}).$$ Hence \begin{equation}\label{vf}\|v\|_p^p\le a_p\|u\|_p^p-b_p\int_{-\pi}^{\pi}F_p( e^{it})\frac{dt}{2\pi}.\end{equation}
Further, by \cite[Theorem~9.9]{gt},  we have
$$\int_{-\pi}^{\pi}|u( e^{it})|^p\frac{dt}{2\pi}= |u(0)|^p+\frac{1}{2}\int_{\mathbb{D}}\Delta (|u(z)|^p) \log\frac{1}{|z|}dA(z).$$
Further, because $u={\rm Re} (g+h)$, we have
\begin{equation}\label{equ}\begin{split}\Delta (|u|^p)&=p(p-1)|\nabla u|^2|u|^{p-2}\\&=p(p-1)|g'+h'|^2|u|^{p-2}\\&\ge p(p-1)(1-k)^2|h'|^2 |u|^{p-2}.\end{split}\end{equation}
Similarly, again using \cite[Theorem~9.9]{gt}, we have $$\int_{-\pi}^{\pi}|v( e^{it})|^p\frac{dt}{2\pi}= |v(0)|^p+\frac{1}{2}\int_{\mathbb{D}}\Delta (|v(w)|^p) \log\frac{1}{|w|}dA(w)$$ and

\begin{equation}\label{equ1}\begin{split}\Delta (|v|^p)&=p(p-1)|\nabla v|^2|v|^{p-2}\\&=p(p-1)|g'-h'|^2|v|^{p-2}\\&\ge p(p-1)(1-k)^2|h'|^2 |v|^{p-2}.\end{split}\end{equation}

Then \eqref{equ} and \eqref{equ1} and the first equation in Lemma~\ref{simple} imply that
 \begin{equation}\label{futja}
 \begin{split}|\Delta F_p(z)|&\le 4 p(p-1) R^{p-2}|g'\cdot h'|\\&\le 4 k p(p-1) c_p\left(|u|^{p-2}+|v|^{p-2}\right)| h'|^2\\ \nonumber
&\leq\frac{4 k  c_p}{(1-k)^2}(\Delta (|u|^p)+\Delta (|v|^p))\\&=(K^2-1)c_p(\Delta (|u|^p)+\Delta (|v|^p)). \end{split}\end{equation}
Thus
\beq\label{LKJ-1}\int_{-\pi}^{\pi}F_p( e^{it})\frac{dt}{2\pi}&=&
F_p(0)+\frac{1}{2}\int_{\mathbb{D}}\Delta (F_p(w)) \log\frac{1}{|w|}dA(W),
\\ \nonumber
& \ge&  F_p(0)-\frac{1}{2\pi}\int_{\mathbb{D}}|\Delta (F_p(w))| \log\frac{1}{|w|}dA(w)
\\ \nonumber
& \ge& F_p(0)-(K^2-1)c_p\int_{\mathbb{D}}\Delta \left(|u(w)|^p+|v(w)|^p\right)\log\frac{1}{|w|}dA(w).
\eeq
By \eqref{eqcon}, we have $F_p(0)\ge 0$, which, together with (\ref{LKJ-1}), gives that
\[\begin{split}&\int_{-\pi}^{\pi}F_p(e^{it})\frac{dt}{2\pi} \ge -(K^2-1)c_p(\|u\|_p^p+\|v\|_p^p).\end{split}\] Now  by \eqref{vf2} and second relation in Lemma~\ref{simple}, we have \[\begin{split}\|f\|_p^p&\le m_p\|u\|_p^p\\&-n_p\left((K^2-1)c_p(|u(0)|^p+|v(0)|^p)-(K^2-1)c_p(\|u\|_p^p+\|v\|_p^p)\right)\\ &\le m_p\|u\|_p^p+ (K^2-1)c_p n_p 2^{1-p/2} \|f\|_p^p.
\end{split}\]
Thus, for $(K^2-1)c_p n_p 2^{1-p/2}<1$, we obtian
 $$\|f\|_p\le C(K,p)\|u\|_p,$$ where $$C^p(K,p)=m_p\left(1-(K^2-1)c_p n_p 2^{1-p/2}\right)^{-1}.$$
 Observe that $C^p(1,p)=m_p$ is sharp by the result of Verbitsky.

Further, by \eqref{vf}, we have $$(1-(K^2-1)b_pc_p)\|v\|_p^p\le (a_p+(K^2-1)b_pc_p)\|u\|_p^p.$$
We assume that $(K^2-1)b_pc_p<1$. Then
 $$\| v\|_p\le c(K,p)\|u\|_p,$$ where

$$c(K,p)=\frac{(a_p+(K^2-1)b_pc_p)^{\frac{1}{p}}}{(1-(K^2-1)b_pc_p)^{\frac{1}{p}}}$$ and again for $K=1$, $c^p(1,p)=a_p$ is sharp by Pichorides result. Notice that both constants $c(K,p)$ and $C(K,p)$ are meaningful for small constant $K$. On the other hand Theorem \ref{thm-0.1} guarantees the existence of constant for big $K$.
\qed



\begin{Thm}{\rm (\cite[Theorem  2]{H-L})}\label{Thm-HL-2}
Let $f=u+iv$ be holomorphic in $\mathbb{D}$. For $p\in(0,1]$ and $\iota\in[0,\infty)$, if $$M_{p}(r,u)=O\left(\frac{1}{(1-r)^{\iota}}\right)$$ as $r\rightarrow1^{-}$,
then $$M_{p}(r,f')=O\left(\frac{1}{(1-r)^{1+\iota}}\right)$$ as $r\rightarrow1^{-}$.
\end{Thm}

\subsection*{The proof of Theorem \ref{thm-K-L}}
 Similar to the proof of Theorem \ref{thm-0.1}, we let $f=h + \overline{g}$, where $h$ and $g$ are analytic
 in $\mathbb{D}$ with $g(0)=0$. Let $F_{1}=h+g$ and $F_{2}=h-g$. Then $u={\rm Re}(F_{1})$ and $v={\rm Im}(F_{2})$.

 We first prove $(\mathscr{B}_{1})$.
It follows from $u\in\mathbf{h}^{1}$ and Theorem D that ${\rm Im}(F_{1})\in\mathbf{h}^{q}$ for all $q\in(0,1)$.
This together with Lemma I and H\"older's inequality give
\beqq
M_{q}^{q}(r,F_{1})&\leq&M_{q}^{q}(r,u)+M_{q}^{q}(r,{\rm Im}(F_{1}))
\leq\,M_{q}^{q}(r,u)+\|{\rm Im}(F_{1})\|_{q}^{q}\\
&\leq&M_{1}^{q}(r,u)\left(\frac{1}{2\pi}\int_{0}^{2\pi}1^{\frac{1}{1-q}}d\theta\right)^{1-q}+\|{\rm Im}(F_{1})\|_{q}^{q}\\
&\leq&\|u\|_{1}^{q}+\|{\rm Im}(F_{1})\|_{q}^{q}<\infty.
\eeqq Then $F_{1}\in\mathbf{h}^{q}$.
By Theorem H, we see that there is a positive constant $C=C(q)$  such that
\beq\label{eq-ghk-1}
\frac{1}{2\pi}\int_{0}^{2\pi}\big(\mathscr{G}[F_{1}](e^{i\theta})\big)^{q}d\theta&=&
\frac{1}{2\pi}\int_{0}^{2\pi}\left(\int_{0}^{1}(1-r)|F_{1}'(re^{i\theta})|^{2}dr\right)^{\frac{q}{2}}d\theta\\ \nonumber
&\leq&C^{q}\|F_{1}\|_{q}^{q}.
\eeq




Combing (\ref{eq-ghk-2}), (\ref{eq-ghk-1})  and Lemma I gives
\beqq
\|\mathscr{G}[F_{2}]\|_{L^{q}}&=&\left(\frac{1}{2\pi}\int_{0}^{2\pi}\left(\int_{0}^{1}(1-r)|F_{2}'(re^{i\theta})|^{2}dr\right)^{\frac{q}{2}}d\theta\right)^{\frac{1}{q}}\\
&\leq&\left(\frac{1}{2\pi}\int_{0}^{2\pi}\left(\int_{0}^{1}(1-r)\left(2K^{2}|F_{1}'(re^{i\theta})|^{2}+2K'\right)dr\right)^{\frac{q}{2}}d\theta\right)^{\frac{1}{q}}\\
&\leq&2^{\frac{1}{q}-\frac{1}{2}}\left(K\|\mathscr{G}[F_{1}]\|_{L^{q}}+\sqrt{K'}\right).
 \eeqq
 This together with  Theorem H yields that
 \beqq
 \|F_{2}\|_{q}&\leq&C\|\mathscr{G}[F_{2}]\|_{L^{p}}\leq2^{\frac{1}{q}-\frac{1}{2}}C\left(K\|\mathscr{G}[F_{1}]\|_{L^{q}}+\sqrt{K'}\right)\\
 &\leq&2^{\frac{1}{q}-\frac{1}{2}}C\left(KC\|F_{1}\|_{q}+\sqrt{K'}\right)
 \eeqq
 where $C=C(q)$ is a positive constant.
Consequently, \be\label{K-03}\|v\|_{q}\leq\|F_{2}\|_{q}\leq\,2^{\frac{1}{q}-\frac{1}{2}}C\left(KC\|F_{1}\|_{q}+\sqrt{K'}\right)<\infty.\ee

Next, we prove  $(\mathscr{B}_{2})$. By the assumption and Theorem E, we see that ${\rm Im}(F_{1})\in\mathbf{h}^{1}$.
It follows from  Theorem H that there is a positive constant $C$, independently of $f$,  such that
\beq\label{eq-ghk-5}
\frac{1}{2\pi}\int_{0}^{2\pi}\mathscr{G}[F_{1}](e^{i\theta})d\theta&=&
\frac{1}{2\pi}\int_{0}^{2\pi}\left(\int_{0}^{1}(1-r)|F_{1}'(re^{i\theta})|^{2}dr\right)^{\frac{1}{2}}d\theta\\ \nonumber
&\leq&C\|F_{1}\|_{1}<\infty.
\eeq
Combing   (\ref{K-03}) and (\ref{eq-ghk-5}) yields
\beqq
\|v\|_{1}\leq\|F_{2}\|_{1}\leq\,2^{\frac{1}{2}}C\left(KC\|F_{1}\|_{1}+\sqrt{K'}\right)<\infty.
 \eeqq

Now, we prove $(\mathscr{B}_{3})$. Since $u\in\mathbf{h}^{p}$ for some $p\in(0,1)$, we see that
there is positive constant $C$ such that
\beqq
M_{p}(r,u)\leq\,C.
\eeqq
By Theorem O, there is positive constant $C$ such that
$$M_{p}(r,F_{1}')\leq\frac{C}{(1-r)},$$
which, together with (\ref{eq-ghk-2}), gives that
\be\label{eq-hn-1}M_{p}(r,F_{2}')\leq2^{\frac{1}{p}-1}\left(KM_{p}(r,F_{1}')+\sqrt{K'}\right)\leq\frac{C}{(1-r)}.\ee
It follows from (\ref{eq-hn-1}) and Theorem H  that
$$M_{p}(r,F_{2})=O\left(\left(\log\frac{1}{1-r}\right)^{\frac{1}{p}}\right)$$ as $r\rightarrow1^{-}$,
which implies that
$$M_{p}(r,v)=O\left(\left(\log\frac{1}{1-r}\right)^{\frac{1}{p}}\right)$$ as $r\rightarrow1^{-}$.

At last, we prove the sharpness part of (\ref{jk-1q}).
 Consider the functions
$$f_{j}(z)=\frac{2K}{K+1}{\rm Re}(h_{j}(z))+i\frac{2}{K+1}{\rm Im}(h_{j}(z)),~z\in\mathbb{D},$$
where $h_{j}(z)=e^{\frac{j\pi}{2}i}(1-z)^{-j-1}$ for $j\in\{0,1,\ldots\}$.
Then ${\rm Re}(f_{j})\in\mathbf{h}^{\frac{1}{j+1}}$ and
\beq\label{eq-ngk-10}
\frac{1}{2\pi}\int_{0}^{2\pi}|{\rm Im}(f_{j}(re^{i\theta}))|^{\frac{1}{1+j}}d\theta&=&
\frac{1}{2\pi}\left(\frac{2}{K+1}\right)^{\frac{1}{1+j}}\int_{0}^{2\pi}|{\rm Im}(h_{j}(re^{i\theta}))|^{\frac{1}{1+j}}d\theta\\ \nonumber
&\leq&\frac{1}{2\pi}\left(\frac{2}{K+1}\right)^{\frac{1}{1+j}}\int_{0}^{2\pi}|h_{j}(re^{i\theta})|^{\frac{1}{1+j}}d\theta\\ \nonumber
&=&O\left(\log\frac{1}{1-r}\right)
\eeq as $r\rightarrow1^{-}$. On the other hand,

\beqq
\frac{1}{2\pi}\int_{0}^{2\pi}|h_{j}(re^{i\theta})|^{\frac{1}{1+j}}d\theta&\leq&\left(\frac{K+1}{2}\right)^{\frac{1}{1+j}}
\frac{1}{2\pi}\int_{0}^{2\pi}|f_{j}(re^{i\theta})|^{\frac{1}{1+j}}d\theta\\
&\leq&\left(\frac{K+1}{2}\right)^{\frac{1}{1+j}}\left(\|{\rm Re}(f_{j})\|_{1/(1+j)}^{1/(1+j)}+
\int_{0}^{2\pi}|{\rm Im}(f_{j}(re^{i\theta}))|^{\frac{1}{1+j}}\frac{d\theta}{2\pi}\right),
\eeqq
which implies that
\beq\label{eq-ngk-11}
\frac{1}{2\pi}\int_{0}^{2\pi}|{\rm Im}(f_{j}(re^{i\theta}))|^{\frac{1}{1+j}}d\theta&\geq&
\left(\frac{2}{K+1}\right)^{\frac{1}{j+1}}\frac{1}{2\pi}\int_{0}^{2\pi}|h_{j}(re^{i\theta})|^{\frac{1}{1+j}}d\theta\\ \nonumber
&&-\|{\rm Re}(f_{j})\|_{1/(1+j)}^{1/(1+j)}\\ \nonumber
&=&O\left(\log\frac{1}{1-r}\right)
\eeq as $r\rightarrow1^{-}$.
Combing (\ref{eq-ngk-10}) and  (\ref{eq-ngk-11}) implies the estimate of {\rm (\ref{jk-1q})} is sharp for
$p=1,\,1/2,\,1/3,\,\ldots$.
The proof of this theorem is finished.
\qed

\subsection*{The proof of Theorem \ref{thm-07}}
Similar to the proof of Theorem \ref{thm-0.1}, we let $f=h + \overline{g}$, where $h$ and $g$ are analytic
 in $\mathbb{D}$ with $g(0)=0$. Let $F_{1}=h+g$ and $F_{2}=h-g$. Then $u={\rm Re}(F_{1})$ and $v={\rm Im}(F_{2})$.
 Since $|F_{1}'|=|\nabla u|\in\mathbf{H}^{p}_{g}$, by Theorem G, we see that

\be\label{eq-l-01}F_{1}\in H^{q}.\ee
It follows from Theorem H that there is a positive constant $C=C(q)$ such that

\beqq
\|\mathscr{G}[F_{1}]\|_{L^{q}}\leq\,C\|F_{1}\|_{q},
\eeqq
which, together with  (\ref{eq-ghk-2}) and Lemma I, gives
\beq\label{eq-l-02}
\|\mathscr{G}[F_{2}]\|_{L^{q}}
&\leq&\left(\frac{1}{2\pi}\int_{0}^{2\pi}\left(\int_{0}^{1}(1-r)\left(2K^{2}|F_{1}'(re^{i\theta})|^{2}+2K'\right)dr\right)^{\frac{q}{2}}d\theta\right)^{\frac{1}{q}}\\ \nonumber
&\leq&2^{\frac{1}{q}-\frac{1}{2}}\left(K\|\mathscr{G}[F_{1}]\|_{L^{q}}+\sqrt{K'}\right)\\ \nonumber
&\leq&2^{\frac{1}{q}-\frac{1}{2}}\left(KC\|F_{1}\|_{q}+\sqrt{K'}\right)\\  \nonumber
&<&\infty.
 \eeq
By (\ref{eq-l-02}) and Theorem H, there is a positive constant $C=C(q)$ such that

\be\label{eq-l-03}
\|F_{2}\|_{q}\leq\,C\|\mathscr{G}[F_{2}]\|_{L^{q}}<\infty.
\ee
Since $$f=\frac{1}{2}(F_{1}+F_{2})+\frac{1}{2}(\overline{F_{1}}-\overline{F_{2}}),$$
we see that
\be\label{eq-l-04}
|f|^{q}\leq(|F_{1}|+|F_{2}|)^{q}\leq|F_{1}|^{q}+|F_{2}|^{q}.
\ee
Combining (\ref{eq-l-01}), (\ref{eq-l-03}) and (\ref{eq-l-04}) gives
\beqq
\|f\|_{q}^{q}\leq\|F_{1}\|_{q}^{q}+\|F_{2}\|_{q}^{q}<\infty.
\eeqq

To show that $q=p/(1-p)$ can not be replaced by any larger index, we consider the function

\beqq
f(z)=\frac{2K}{K+1}{\rm Re}(\Phi(z))+i\frac{2}{K+1}{\rm Im}(\Phi(z)),~z\in\mathbb{D},
\eeqq where $\Phi'(z)=1/(1-z)^{\epsilon-1/p}$ for some small $\epsilon>0$.
The proof of this theorem is complete.
 \qed

\bigskip

{\bf Data Availability} Our manuscript has no associated data.\\

{\bf Conflict of interest} The authors declare that they have no conflict of interest.

\bigskip

{\bf Acknowledgements.}\label{acknowledgments}
 The research of the first author was partly supported by the National Science
Foundation of China (grant no. 12571080).

\normalsize

\end{document}